\newcommand\extrafootertext[1]{%
    \bgroup
    \renewcommand\thefootnote{\fnsymbol{footnote}}%
    \renewcommand\thempfootnote{\fnsymbol{mpfootnote}}%
    \footnotetext[0]{#1}%
    \egroup
}
\newtheorem{thm}{Theorem}
\newtheorem{lemma}[thm]{Lemma}
\newtheorem{defn}[thm]{Definition}
\newtheorem{cor}[thm]{Corollary}
\theoremstyle{definition}
\newtheorem{remark}[thm]{Remark}
\numberwithin{equation}{section}
\numberwithin{thm}{section}
\newcommand{\ddp}[2]{\frac{\partial#1}{\partial#2}}
\newcommand{\D}{\partial D}
\renewcommand{\S}{\mathcal{S}}
\newcommand{\outside}{\mathbb{R}^3\setminus \overline{D}}
\newcommand{\uin}{u^{\mathrm{in}}}
\newcommand{\C}{\mathcal{C}}
\newcommand{\de}{\,\mathrm{d}}
\newcommand{\iu}{\mathrm{i}}
\newcommand{\vn}{\textit{\textbf{v}}_n}
\newcommand{\vj}{\textit{\textbf{v}}_j}
\newcommand{\vk}{\textit{\textbf{v}}_k}
\newcommand{\omold}{\omega^{\text{old}}}
\newcommand{\omnew}{\omega^{\text{new}}}
\title{Robustness of subwavelength devices: a case study of cochlea-inspired rainbow sensors}
\author{Bryn Davies \thanks{Department of Mathematics, Imperial College London, 180 Queen's Gate, London SW7 2AZ, United Kingdom} \and Laura Herren \thanks{Department of Statistics and Data Science, Yale University, New Haven, CT 06511, USA} }
\date{}
\begin{document}

\maketitle
\extrafootertext{Correspondence should be addressed to: bryn.davies@imperial.ac.uk}

\begin{abstract}
    The aim of this work is to derive precise formulas which describe how the properties of subwavelength devices are changed by the introduction of errors and imperfections. As a demonstrative example, we study a class of cochlea-inspired rainbow sensors. These are devices based on a graded array of subwavelength resonators which have been designed to mimic the frequency separation performed by the cochlea. We show that the device's properties (including its role as a signal filtering device) are stable with respect to small imperfections in the positions and sizes of the resonators. Additionally, if the number of resonators is sufficiently large, then the device's properties are stable under the removal of a resonator.
\end{abstract}

\vspace{0.5cm}
\noindent\textbf{Mathematics Subject Classification:} 35J05, 35C20, 35P20, 74J20.
		
\vspace{0.2cm}
		
\noindent\textbf{Keywords:} subwavelength resonance, graded metamaterials, Helmholtz scattering, capacitance matrix, asymptotic expansions of eigenvalues, boundary integral methods

\vspace{0.5cm}

\section{Introduction}

The cochlea is the key organ of mammalian hearing, which filters sounds according to frequency and then converts this information to neural signals. Across the biological world, including in humans, cochleae have remarkable abilities to filter sounds at a very high resolution, over a wide range of volumes and frequencies. This exceptional performance has given rise to a community of researchers seeking to design artificial structures which mimic the function of the cochlea \cite{davies2019fully, davies2020hopf, babbs2011quantitative, karlos2020cochlea, rupin2019mimicking, zhao2019metamaterial}. These devices are based on the phenomenon known as \textit{rainbow trapping}, whereby frequencies are separated in graded resonant media. This has been observed in a range of settings including acoustics \cite{zhu2013acoustic}, optics \cite{tsakmakidis2007trapped} (where the term `rainbow trapping' was coined), water waves \cite{bennetts2018graded} and plasmonics \cite{jang2011plasmonic}, among others.

The motivation for designing cochlea-inspired sensors is twofold. Firstly, it is hoped that they can be used to design artificial hearing approaches, either through the realisation of physical devices \cite{rupin2019mimicking, joyce2015developing} or by informing computational algorithms \cite{ammari2020biomimetic, lyon2017human}. Additionally, it is hoped that modelling and building these devices will yield new insight into the function of the cochlea itself. The cochlea is a small organ that is buried inside an organism's head, meaning that experiments on living samples is exceptionally difficult. This means that many of the characteristics which are unique to living specimens are still poorly understood. The nature of the amplification mechanism used by the cochlea is a prime example of this \cite{hudspeth2008making}. It is hoped that studying artificial cochlea-inspired devices, which can be both modelled and experimented on more easily, will yield new clues into the possible forms of this amplification \cite{davies2020hopf, rupin2019mimicking, joyce2015developing}.

Micro-structured media with strongly dispersive behaviour, such as the cochlea-like rainbow sensors considered here, are examples of \textit{acoustic metamaterials}. Metamaterials are a diverse collection of materials that have extraordinary and `unnatural' properties, such as negative refractive indices and the ability to support cloaking effects \cite{craster2012acoustic, kadic2019review}. One of the challenges in this field, however, is that errors and imperfections are inevitably introduced when devices are manufactured, which has the potential to significantly alter their function. For this reason, a large field has emerged studying \textit{topologically protected} structures, whose properties experience greatly enhanced robustness thanks to the topological properties of the underlying periodic media \cite{khanikaev2013photonic, fefferman2017topologically, ammari2020topologically}. While the theory of topopogical protection has deep implications for the design of rainbow sensors \cite{chaplain2020delineating, chaplain2020topological}, there is yet to be an established link with biological structures and we will study a conventional graded metamaterial in this work. 

The aim of this work is to derive formulas which describe how the properties of a cochlea-inspired rainbow sensor are affected by the introduction of errors and imperfections. This will give quantitative insight into the extent to which these devices are robust with respect to manufacturing errors. It may also yield insight into the cochlea itself, which has a remarkable ability to function effectively even when significantly damaged. As depicted in \Cref{fig:cochlea_damage}, cochlear receptor cells are often significantly damaged in older organisms. However, it has been observed that humans can lose as much as 30--50\% of their receptor cells without any perceptible loss of hearing function \cite{CDC, wu2020age}. This remarkable robustness is part of the motivation for this study: how do cochlea-inspired rainbow sensors behave under similar perturbations? 

We will study a passive device consisting of an array of material inclusions whose properties resemble those of air bubbles in water. These inclusions act as resonators, oscillating with so-called breathing modes, and exhibit resonance at subwavelength scales, often known as \textit{Minnaert resonance} \cite{minnaert1933musical, devaud2008minnaert, ammari2018minnaert}. Devices have been built based on these principles by injecting bubbles into polymer gels \cite{leroy2009design, leroy2009transmission}. It was shown in \cite{davies2019fully} that by grading the size of the resonators, to give the geometry depicted in \Cref{fig:rainbow}, it is possible to replicate the spatial frequency separation of the cochlea.

We will use boundary integral methods to analyse the scattering of the acoustic field by the cochlea-inspired rainbow sensor \cite{ammari2018mathematical}. We will define the notion of subwavelength resonance as an asymptotic property, in terms of the material contrast, and perform an asymptotic analysis of the structure's resonant modes. This first-principles approach yields an approximation in terms of the \textit{generalized capacitance matrix}. We will recap  this theory in \Cref{sec:prelims} and refer the reader to \cite{capacitance} for a more thorough exposition. In \Cref{sec:imperfections}, we study the effect of small perturbations to the size and position of the resonators. The derived formulas show that the rainbow sensor's properties are stable with respect to these imperfections. Then, in \Cref{sec:remove}, we examine more drastic perturbations, namely those caused by removing resonators from the array. This is inspired by the images in \Cref{fig:cochlea_damage}, where in many places the receptor cell stereocilia have been completely destroyed. We will show that, provided that array is sufficiently large, the sensor's properties are nonetheless stable. Finally, in \Cref{sec:signals}, we study the equivalent signal transformation that is induced by the cochlea-inspired rainbow sensor and show that its properties are stable with respect to changes in the device.

\begin{figure}
    \centering
    \includegraphics[trim=0 8.3cm 0 0, clip,width=0.45\linewidth]{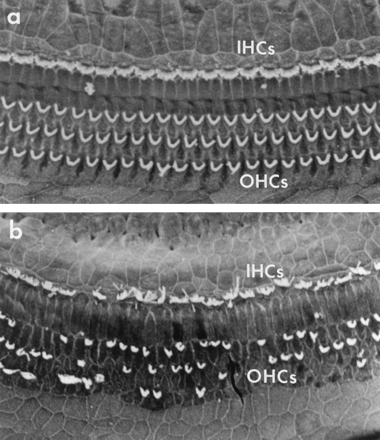}
    \hspace{0.2cm}
    \includegraphics[trim=0 0.8cm 0 7.5cm, clip,width=0.45\linewidth]{HCdamage.png}
    \caption{The receptor cells in a (a) normal and (b) damaged cochlea. The receptor cells are arranged as one row of inner hair cells (IHCs) and three rows of outer hair cells (OHCs). In a damaged cochlea, the stereocilia are severely deformed and, in many cases, missing completely. The images are scanning electron micrographs of rat cochleae, provided by Elizabeth M Keithley.}
    \label{fig:cochlea_damage}
\end{figure}

\begin{figure}
    \begin{center}
        \includegraphics[width=0.8\linewidth]{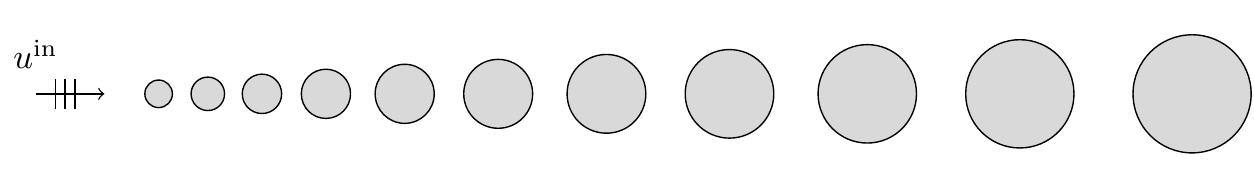}
    \end{center}
    \caption{A cochlea-inspired rainbow sensor. The gradient in the sizes of the resonators means the device separates different frequencies in space: higher frequencies will give a peak amplitude to the left of the array, while lower frequencies will give a maximal response further to the right. This mimics the action of the cochlea in filtering sound waves.} \label{fig:rainbow}
\end{figure}

\section{Mathematical preliminaries} \label{sec:prelims}
\subsection{Problem setting}

Will will study a Helmholtz scattering problem to model the scattering of time-harmonic acoustic waves by the resonator array. The resonators are modelled as material inclusions $D_1,\dots,D_N$ which are disjoint, bounded and have boundaries in $C^{1,\alpha}$ for some $0<\alpha<1$. We denote the wave speeds inside the resonators as $v$ and in the background medium as $v_0$. For an angular frequency $\omega$ we introduce the wavenumbers
\begin{equation*}
    k=\frac{\omega}{v} \quad\text{and}\quad k_0=\frac{\omega}{v_0}.
\end{equation*}
Additionally, we introduce the dimensionless contrast parameter
\begin{equation}
    \delta=\frac{\rho}{\rho_0},
\end{equation}
which is the ratio of the densities of the materials inside and outside the resonators. The scattering problem, due to the resonator array 
\begin{equation}
D=D_1\cup\dots\cup D_N,    
\end{equation}
is then given by
\begin{equation} \label{eq:helmholtz_equation}
\begin{cases}
\left( \Delta + k_0^2 \right) u = 0 & \text{in } \outside, \\
\left( \Delta + k^2 \right) u = 0, & \text{in } D, \\
u_+ - u_- = 0, & \text{for } \D,\\
\delta \ddp{u}{\nu}\big|_+ -  \ddp{u}{\nu}\big|_- = 0, & \text{on } \D,\\
u^s := u - \uin \text{ satisfies the SRC}, & \text{as } |x|\to\infty,
\end{cases}
\end{equation}
where the SRC refers to the Sommerfeld radiation condition, which guarantees that the scattered waves radiate energy outwards to the far field \cite{ammari2018mathematical}.

\begin{defn}[Resonance]
We define a \emph{resonant frequency} to be $\omega\in\mathbb{C}$ such that there exists a non-zero solution $u$ to \eqref{eq:helmholtz_equation} in the case that $\uin=0$. The solution $u$ is the \emph{resonant mode} associated to $\omega$.
\end{defn}

In this work, we will characterise \emph{subwavelength} resonance in terms of the limit of the contrast parameter $\delta$ being small. In particular, we assume that
\begin{equation}
    \delta\ll1 \quad\text{while}\quad v,v_0,\tfrac{v}{v_0}=O(1) \text{ as } \delta\to0.
\end{equation}
This approach allows us to fix the size and position of the resonators and study subwavelength resonant modes as those which exist at asymptotically low frequencies when $\delta$ is small.
\begin{defn}[Subwavelength resonance] \label{defn:subw_res}
We define a subwavelength resonant frequency to be a resonant frequency $\omega=\omega(\delta)$ that depends continuously on $\delta$ and satisfies
\begin{equation*}
    \omega\to0 \quad\text{as}\quad \delta\to0.
\end{equation*}
\end{defn}
This asymptotic approach has been shown to be effective at modelling devices based on the canonical example of air bubbles in water \cite{ammari2018minnaert, capacitance}, where the contrast parameter is approximately $\delta\approx 10^{-3}$. Furthermore, this asymptotic definition of subwavelength resonance reveals that there is a fundamental difference between these resonant modes and those which are not subwavelength, and leads to the following existence result:

\begin{lemma}
    A system of $N$ subwavelength resonators has $N$ subwavelength resonant frequencies with positive real part.
\end{lemma}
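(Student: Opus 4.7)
The plan is to reduce the resonance problem to a finite-dimensional eigenvalue problem for the generalized capacitance matrix $\C$ and then promote the leading-order count to a count of genuine resonances via Gohberg--Sigal characteristic-value theory. First, I would represent a candidate resonant mode as single-layer potentials,
\begin{equation*}
u(x) = \begin{cases} \S_D^{k}[\psi](x), & x \in D, \\ \S_D^{k_0}[\phi](x), & x \in \outside, \end{cases}
\end{equation*}
which automatically satisfies the Helmholtz equations inside and outside $D$ and, in the exterior, the Sommerfeld radiation condition. Substituting this into the transmission conditions of \eqref{eq:helmholtz_equation} with $\uin = 0$ and using the standard jump relations for $\S_D^k$ yields a $2\times 2$ boundary integral system $\mathcal{A}(\omega,\delta)[\phi,\psi]^T = 0$ on $\D$, so that $\omega$ is a resonant frequency precisely when $\mathcal{A}(\omega,\delta)$ fails to be invertible.

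The next step is to expand $\mathcal{A}(\omega,\delta)$ in powers of $\omega$ and $\delta$, using the three-dimensional expansion $\S_D^{k} = \S_D + k\S_D^{(1)} + k^2 \S_D^{(2)} + \cdots$ together with $\mathcal{K}_D^{k,*} = \mathcal{K}_D^* + O(k^2)$. At $(\omega,\delta) = (0,0)$ the operator has a kernel of dimension exactly $N$: it is spanned by pairs of densities involving the functions $\psi_i$ determined by $\S_D[\psi_i] = \chi_{\partial D_i}$, reflecting that the harmonic extensions taking distinct constant values on each $\partial D_i$ solve the limit problem. Projecting the first non-trivial correction of $\mathcal{A}(\omega,\delta)$ onto this $N$-dimensional kernel yields a reduced eigenvalue equation of the form
\begin{equation*}
\omega^2 \, a \;=\; \delta \, \C \, a \;+\; o(\delta),
\end{equation*}
where $\C$ is the generalized capacitance matrix built from the classical capacitance matrix $C_{ij} = -\int_{\partial D_i} \psi_j \de \sigma$ and the interior material constants. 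Because $C$ is symmetric and positive definite (it is the Gram matrix associated with the harmonic coordinates of $N$ disjoint conductors), $\C$ has $N$ strictly positive real eigenvalues $\lambda_1, \ldots, \lambda_N$, producing $N$ leading-order candidates $\omega_i(\delta) \approx \sqrt{\delta \lambda_i}$ with positive real part, and each tending to $0$ as $\delta \to 0$.

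Finally, I would upgrade these leading-order roots to a rigorous count of genuine resonances using the generalized argument principle of Gohberg--Sigal applied to the holomorphic operator-valued map $\omega \mapsto \mathcal{A}(\omega,\delta)$. Choosing a contour in the right half-plane of radius proportional to $\sqrt{\delta}$, encircling the $N$ leading-order roots $\sqrt{\delta\lambda_i}$ but excluding their negatives, the principle guarantees that the total characteristic-value multiplicity inside the contour is preserved under the perturbation from the $\delta=0$ problem, and so equals $N$. The main obstacle is establishing \emph{uniform-in-$\delta$} invertibility of $\mathcal{A}(\omega,\delta)$ on this contour: this requires careful Fredholm-type control of the higher-order terms in the expansion of $\mathcal{A}$ together with the fact that $\S_D$ is invertible on $H^{-1/2}(\D)$ in three dimensions. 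Given that bound, the continuous $\delta$-dependence built into the asymptotic expansion gives exactly $N$ subwavelength resonant frequencies with positive real part, completing the proof.
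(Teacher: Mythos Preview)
Your proposal is correct and follows the same approach as the paper, which simply states that the result follows by applying Gohberg--Sigal theory to perturb the characteristic values at $(\omega,\delta)=(0,0)$ and refers to \cite{capacitance, ammari2018mathematical} for details. You have essentially written out those details: the layer-potential representation, the $N$-dimensional kernel of the limiting operator, the reduction to the generalized capacitance eigenproblem, and the generalized Rouch\'e/argument-principle count are exactly the ingredients of the cited argument.
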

\begin{proof}
This follows using Gohberg-Sigal theory to perturb the solutions that exist in the limiting case where $\delta=0$, $\omega=0$, see \cite{capacitance, ammari2018mathematical} for details.
\end{proof}

The subwavelength resonant frequencies of a cochlea-inspired rainbow sensor composed of 22 subwavelength resonators are shown in \Cref{fig:fullspectrum}. The multipole expansion method (see the appendices of \cite{ammari2020topologically} for details) is used to simulate an array of spherical resonators which is which 35mm long and has the material parameters of air bubbles in water. The real parts of the resonant frequencies span the range 7.4kHz--33.8kHz (\Cref{fig:fullspectrum} shows angular frequency). This range can be fine tuned to match the desired function (or to match the range of human hearing more closely) \cite{davies2020hopf}. The negative imaginary parts describe the loss of energy to the far field.

\begin{figure}
    \centering
    \includegraphics[width=0.65\linewidth]{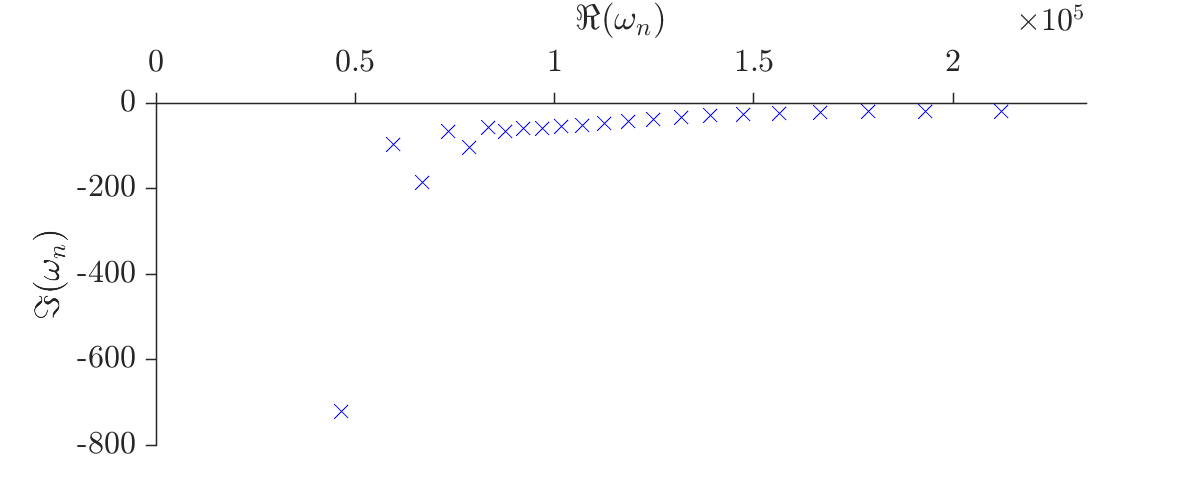}
    \caption{The 22 subwavelength resonant frequecies of a cochlea-inspired rainbow sensor composed of 22 subwavelength resonators, plotted in the lower-right complex plane.}
    \label{fig:fullspectrum}
\end{figure}

\subsection{Boundary integral operators}

In order to model the scattering of waves by the array $D$ we will use layer potentials to represent solutions. 

\begin{defn}[Single layer potential]
Given a bounded domain $D\subset\mathbb{R}^3$ and a wavenumber $k\in\mathbb{C}$ we define the Helmholtz single layer potential as
\begin{equation*}
    \S_D^k[\varphi](x)=\int_{\D} G^k(x-y)\varphi(y)\de\sigma(y), \qquad \varphi\in L^2(\D), \, x\in\mathbb{R}^3,
\end{equation*}
where the Green's function $G$ is given by
\begin{equation*}
    G^k(x)=-\frac{e^{\iu k|x|}}{4\pi|x|}, \qquad x\neq0.
\end{equation*}
\end{defn}

The value of the single layer potential is that we can use it to represent solutions to the Helmholtz scattering problem \eqref{eq:helmholtz_equation}. In particular, there exist some densities $\psi,\phi\in L^2(\D)$ such that
\begin{equation} \label{eq:representation}
    u(x)=\begin{cases}
    \uin(x)+\S_D^{k_0}[\psi], & x\in\outside,\\
    \S_D^k[\phi], & x\in D.
    \end{cases}
\end{equation}
This representation means that the Helmholtz equations and the radiation condition from \eqref{eq:helmholtz_equation} are necessarily satisfied. It remains only to find densities $\psi,\phi\in L^2(\D)$ such that the two transmission conditions across the boundary $\D$ are satisfied. See \cite{ammari2018mathematical} more details on the use of layer potentials in modelling scattering problems. In this work, we will make use of some elementary properties. Since we define subwavelength resonance as an asymptotic property (\Cref{defn:subw_res}), we will make use of the asymptotic expansion
\begin{equation}
    \S_D^k=\S_D^0+k\S_{D,1}+O(k^2), \qquad\text{as } k\to0,
\end{equation}
where $\S_{D,1}[\varphi]=(4\pi\iu)^{-1}\int_{\D}\varphi\de\sigma$ and convergence holds in the operator norm. In order to derive leading-order approximations, we will make use of the fact that $S_D^0$ is invertible \cite{ammari2018mathematical}:

\begin{lemma}
$S_D^0$ is invertible as a map from $L^2(\D)$ to $H^1(\D)$.
\end{lemma}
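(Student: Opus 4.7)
The plan is to verify the three standard ingredients: boundedness, injectivity, and Fredholm solvability. Boundedness of $\S_D^0:L^2(\D)\to H^1(\D)$ is classical, since the kernel $G^0(x)=-\frac{1}{4\pi|x|}$ is weakly singular and defines a classical pseudodifferential operator of order $-1$ on the $C^{1,\alpha}$ boundary $\D$; it therefore gains one derivative and maps $L^2(\D)$ continuously into $H^1(\D)$.

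For injectivity, suppose $\varphi\in L^2(\D)$ satisfies $\S_D^0[\varphi]=0$ on $\D$, and define $u:=\S_D^0[\varphi]$ on all of $\mathbb{R}^3$. Then $u$ is harmonic in $D$ and in $\outside$, continuous across $\D$ (the single layer potential carries no trace jump), vanishes on $\D$ by assumption, and decays as $u(x)=O(|x|^{-1})$ since $u(x)\sim -\frac{1}{4\pi|x|}\int_{\D}\varphi\de\sigma$ for large $|x|$. Uniqueness for the interior Dirichlet problem on $D$ forces $u\equiv 0$ in $D$, and in three dimensions the $|x|^{-1}$ decay is strong enough that the exterior Dirichlet problem also has only the trivial solution, so $u\equiv 0$ in $\outside$ as well. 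The jump relation $\varphi=\ddp{u}{\nu}\big|_+-\ddp{u}{\nu}\big|_-$ then gives $\varphi=0$.

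For surjectivity, I would invoke Fredholm theory. The operator $\S_D^0$ is elliptic of order $-1$ on the compact boundary $\D$, so it is Fredholm as a map $L^2(\D)\to H^1(\D)$. Index zero can be established by a continuous homotopy to the model case of a sphere, on which separation of variables in spherical harmonics diagonalises $\S_D^0$ with strictly negative eigenvalues $-1/(2\ell+1)$, confirming invertibility. Index zero plus the injectivity above then yields surjectivity.

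The main obstacle is pushing the uniqueness argument through with only $L^2$ density on the boundary. The cleanest workaround is to first prove invertibility on the natural energy scale $\S_D^0:H^{-1/2}(\D)\to H^{1/2}(\D)$ via the coercivity identity
\begin{equation*}
    -\int_{\D}\S_D^0[\varphi]\,\varphi\de\sigma=\int_{\mathbb{R}^3}|\nabla u|^2\de x,
\end{equation*}
which gives both injectivity and, together with the symmetry of $\S_D^0$, invertibility by Lax--Milgram. Elliptic regularity of the pseudodifferential operator $\S_D^0$ then upgrades this to an isomorphism $L^2(\D)\to H^1(\D)$, completing the proof.
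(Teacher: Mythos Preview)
Your argument is essentially correct and follows the standard potential-theoretic route (boundedness from the weak singularity, injectivity via interior/exterior Dirichlet uniqueness and the normal-derivative jump, Fredholmness of the order~$-1$ pseudodifferential operator, with the $H^{-1/2}\to H^{1/2}$ coercivity as the clean alternative). However, the paper does not actually supply a proof of this lemma: it simply records the statement and defers to the reference \cite{ammari2018mathematical}. So your proposal is not so much a different approach as a fleshed-out version of what the cited reference contains; there is nothing to compare on the paper's side beyond the citation.
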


\subsection{The generalized capacitance matrix}

Studying the subwavelength resonant properties of the high-contrast structure as an asymptotic property in terms of $\delta\ll1$ leads to a concise characterisation of the resonant states. In particular, we find that the leading-order properties of the resonant frequencies and associated eigenmodes are given in terms of the eigenstates of the \emph{generalized capacitance matrix}, as introduced in \cite{capacitance}. This is a generalization of the notion of capacitance that is widely used in electrostatics to model the distributions of potential and charge in a system of conductors \cite{diaz2011positivity}.

\begin{defn}[Capacitance matrix] \label{def:cap}
Given $N\in\mathbb{N}$ disjoint inclusions $D_1,\dots,D_N\subset\mathbb{R}^3$, the associated capacitance matrix $C\in\mathbb{R}^{N\times N}$ is defined as
\begin{equation*}
    \C_{ij}=-\int_{\D_i}(\S_D^0)^{-1}[\chi_{\D_j}]\de\sigma,\qquad i,j=1,\dots,N,
\end{equation*}
where $\chi_{\D_i}$ is the characteristic function of the boundary $\D_i$.
\end{defn}

In this work, we are interested in cochlea-like rainbow sensors that have resonators with increasing size. In general, in order to use capacitance coefficients to understand the resonant properties of an array of non-identical resonators we need to re-scale the coefficients. The \emph{generalized capacitance matrix} that we obtain is studied at length in \cite{capacitance}. With this approach, we can study arrays of resonators with different sizes, shapes and material parameters. In this work, we are assuming the resonators all have the same interior material parameters (given by the wave speed $v$ and contrast parameter $\delta$) so only need to weight according to the different sizes of the resonators.

\begin{defn}[Volume scaling matrix]
    Given $N\in\mathbb{N}$ disjoint inclusions $D_1,\dots,D_N\subset\mathbb{R}^3$ the volume scaling matrix $V\in\mathbb{R}^{N\times N}$ is the diagonal matrix given by
    \begin{equation*}
        V_{ii}=\frac{1}{\sqrt{|D_i|}}, \qquad i=1,\dots,N,
    \end{equation*}
    where $|D_i|$ is the volume of $D_i$.
\end{defn}

\begin{defn}[Generalized capacitance matrix] \label{def:gencap}
Given $N\in\mathbb{N}$ disjoint inclusions $D_1,\dots,D_N\subset\mathbb{R}^3$ with identical interior material parameters, the associated (symmetric) generalized capacitance matrix $\C\in\mathbb{R}^{N\times N}$ is defined as
\begin{equation*}
    \C=VCV.
\end{equation*}
\end{defn}

In previous works, the generalized capacitance is more often defined as the asymmetric matrix $V^2C$ (see \cite{capacitance} and references therein). Here, we will want to use some of the many existing results about perturbations of eigenstates of symmetric matrices so opt for the symmetric version. Note that $\C=VCV$ is similar to $V^2C$. The value of of the generalized capacitance matrix is clear from the following results.

\begin{thm} \label{thm:res}
	Consider a system of $N$ subwavelength resonators in $\mathbb{R}^3$ and let $\{(\lambda_n,\vn): n=1,\dots,N\}$ be the eigenpairs of the (symmetric) generalized capacitance matrix $\mathcal{C}\in\mathbb{R}^{N\times N}$. As $\delta\to0$, the subwavelength resonant frequencies satisfy the asymptotic formula
	\begin{equation*}
	\omega_n = \sqrt{\delta v^2 \lambda_n}-\iu\delta\tau_n+O(\delta^{3/2}), \quad n=1,\dots,N,
	\end{equation*}
	where the second-order coefficients $\tau_n$ are given by
	\begin{equation*}
	    \tau_n=\frac{v^2}{8\pi v_0}\frac{1}{\|\textbf{v}_n\|^2}\textbf{v}_n^\top VCJCV\textbf{v}_n, \quad n=1,\dots,N,
	\end{equation*}
	with $J$ being the $N\times N$ matrix of ones.
\end{thm}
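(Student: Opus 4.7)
The plan is to recast the resonance problem (with $\uin=0$) as a boundary integral system via the representation \eqref{eq:representation} and then perform a matched asymptotic expansion in the small parameters $\delta$ and $\omega=O(\sqrt\delta)$. Substituting into the transmission conditions yields the continuity equation $\S_D^{k_0}[\psi]=\S_D^k[\phi]$ on $\D$, together with a flux jump equation $\delta(\tfrac12 I+(\mathcal{K}_D^{k_0})^\ast)[\psi]=(-\tfrac12 I+(\mathcal{K}_D^k)^\ast)[\phi]$, where $(\mathcal{K}_D^k)^\ast$ denotes the adjoint Neumann--Poincar\'e operator arising from the jump of the normal derivative of $\S_D^k$. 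A subwavelength resonance is precisely a nontrivial pair $(\phi,\psi)$ solving this system for some $\omega=\omega(\delta)\to 0$.

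To extract a finite-dimensional problem I would integrate the flux condition over each $\partial D_i$, using Green's identity in the form $\int_{\partial D_i}\partial_\nu\S_D^k[\phi]|_-\de\sigma=-k^2\int_{D_i}\S_D^k[\phi]\de x$ (since $(\Delta+k^2)\S_D^k[\phi]=0$ in $D$), together with the analogous identity for the exterior trace of $\S_D^{k_0}[\psi]$ combined with the single-layer jump formula. This produces the scalar identities
\begin{equation*}
\delta\int_{\partial D_i}\psi\de\sigma = \delta k_0^2\int_{D_i}\S_D^{k_0}[\psi]\de x - k^2\int_{D_i}\S_D^k[\phi]\de x,\qquad i=1,\dots,N.
\end{equation*}
At leading order, continuity forces $\psi=\phi+O(\omega)$ and the flux condition forces $\phi_0=\sum_j c_j(\S_D^0)^{-1}[\chi_{\partial D_j}]$, so that $u|_{D_i}\approx c_i$ and $\int_{\partial D_i}\phi_0\de\sigma=-(Cc)_i$ by \Cref{def:cap}. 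Substituting then yields $\omega^2|D_i|c_i=\delta v^2(Cc)_i$, i.e.\ the eigenvalue problem $V^2Cc=(\omega^2/(\delta v^2))c$; since $V^2C$ is similar to $\C=VCV$, this produces $\omega_n=\sqrt{\delta v^2\lambda_n}$ at leading order, with $c=V\vn$.

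For the correction $-\iu\delta\tau_n$ I would push the expansion to order $\delta^{3/2}$. The only source of an imaginary part at this order is the term $\S_{D,1}[\varphi]=(4\pi\iu)^{-1}\int_{\D}\varphi\de\sigma$, which outputs a constant function on $\D$. It enters the scalar identity in two places: through the correction $\psi=\phi+(k-k_0)(\S_D^0)^{-1}\S_{D,1}[\phi]+O(k^2)$ forced by continuity (producing a contribution proportional to $VCJCV\vn$, since $\int_{\partial D_i}(\S_D^0)^{-1}[\chi_{\D}]\de\sigma=-(C\mathbf{1})_i$ and $\int_{\partial D_j}\phi\de\sigma\approx-(Cc)_j$), and through the expansion $\S_D^k[\phi]|_{D_i}=c_i+k\S_{D,1}[\phi]+O(k^2)$ on the right-hand side (producing a contribution proportional to $(\mathbf{1}^\top Cc)\mathbf{1}$). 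Writing $\omega=\omega_0+\omega_1$ and $c=V\vn+c_1$, and taking the inner product of the perturbed equation with $\vn$ (which lies in the kernel of the symmetric leading-order operator) then isolates $\omega_1$.

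The main obstacle is showing that these two seemingly distinct imaginary contributions collapse into the single expression $\tau_n=(v^2/(8\pi v_0))\|\vn\|^{-2}\vn^\top VCJCV\vn$. This rests on the algebraic identity $CV\vn=\lambda_n V^{-1}\vn$, which is immediate from $\C\vn=\lambda_n\vn$ and implies $\vn^\top VCJCV\vn=\lambda_n(\mathbf{1}^\top CV\vn)(\vn^\top V^{-1}\mathbf{1})$. Combined with $\omega_0^2=\delta v^2\lambda_n$ and $k-k_0=\omega_0(v_0-v)/(vv_0)$, the two prefactors telescope via $v(v_0-v)/v_0-v=-v^2/v_0$ to yield exactly $-\iu\delta\tau_n$. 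The $O(\delta^{3/2})$ remainder then follows from the uniform invertibility of $\S_D^0\colon L^2(\D)\to H^1(\D)$ and a standard Neumann series argument applied to the full perturbed system.
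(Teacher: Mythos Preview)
The paper does not actually prove this theorem: \Cref{thm:res} is stated in \Cref{sec:prelims} as part of the background recap, with the reader referred to the reference ``capacitance'' for details. So there is no in-paper proof to compare against.

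That said, your outline is precisely the standard argument used in that literature and is essentially correct. The reduction via integrating the transmission condition over each $\partial D_i$, the identification $\phi_0=\sum_j c_j(\S_D^0)^{-1}[\chi_{\partial D_j}]$, and the resulting eigenvalue problem $V^2Cc=(\omega^2/\delta v^2)c$ are all right, as is locating the imaginary correction in the constant-output operator $\S_{D,1}$. Your algebraic observation $CV\vn=\lambda_n V^{-1}\vn$ (and hence $\vn^\top VCJCV\vn=\lambda_n(\vn^\top V^{-1}\mathbf{1})(\mathbf{1}^\top CV\vn)$) is exactly what makes the two imaginary contributions collapse. The one place where your sketch is thin is the bookkeeping of the $O(\omega)$ correction $\phi_1$ to the density: the expansion $\S_D^k[\phi]|_{D_i}=c_i+k\S_{D,1}[\phi_0]+O(k^2)$ omits the $\S_D^0[\phi_1]$ term, which also contributes at the same order and must be handled (it is real at this order, so does not affect $\tau_n$, but you should say why). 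With that caveat, the argument is sound and matches the approach in the cited source.
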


\begin{cor} \label{prop:eigenvector}
	Let $\textbf{v}_n$ be the normalized eigenvector of $\mathcal{C}$ associated to the eigenvalue $\lambda_n$. Then the normalized resonant mode $u_n$ associated to the resonant frequency $\omega_n$ is given, as $\delta\to0$, by
	\begin{equation*}
	u_n(x)=\begin{cases}
	\textbf{v}_n^\top V\textbf{S}_D^{k_0}(x)+O(\delta^{1/2}), \quad x\in\mathbb{R}^3\setminus \overline{D}, \\
	\textbf{v}_n^\top V\textbf{S}_D^{k}(x)+O(\delta^{1/2}), \quad x\in D,
	\end{cases}
	\end{equation*}
	where $\textbf{S}_D^k:\mathbb{R}^3\to\mathbb{C}^N$ is the vector-valued function given by
	\begin{equation*}
	\textbf{S}_D^k (x)=\begin{pmatrix}
	\mathcal{S}_D^k[\psi_1](x) \\[-0.4em]
	\vdots \\[-0.3em]
	\mathcal{S}_D^k[\psi_N](x)
	\end{pmatrix},  \quad x\in\mathbb{R}^3\setminus \partial D,
	\end{equation*}
	with $\psi_i:=(\mathcal{S}_D^0)^{-1}[\chi_{\partial D_i}]$.
\end{cor}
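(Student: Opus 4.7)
The plan is to work from the single-layer representation \eqref{eq:representation}: for a resonant mode we have $\uin=0$, so there exist densities $\phi,\psi\in L^2(\D)$ with $u_n = \mathcal{S}_D^k[\phi]$ inside $D$ and $u_n = \mathcal{S}_D^{k_0}[\psi]$ on $\outside$. Since \Cref{thm:res} gives $\omega_n = O(\sqrt{\delta})$, both $k = \omega_n/v$ and $k_0 = \omega_n/v_0$ are of order $\sqrt{\delta}$, and the layer potentials are accessible through the expansion $\mathcal{S}_D^k = \mathcal{S}_D^0 + k\,\mathcal{S}_{D,1} + O(\delta)$ (and analogously for $\mathcal{S}_D^{k_0}$). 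The whole proof then reduces to extracting the leading-order form of $\phi$ and $\psi$ from the transmission conditions in \eqref{eq:helmholtz_equation}.

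First I would impose continuity $u_+ = u_-$ on $\D$, which reads $\mathcal{S}_D^{k_0}[\psi] = \mathcal{S}_D^k[\phi]$ and, by invertibility of $\mathcal{S}_D^0$, forces $\psi = \phi + O(\sqrt{\delta})$. The flux condition $\delta\,\partial_\nu u|_+ - \partial_\nu u|_- = 0$, together with the jump relations $\partial_\nu \mathcal{S}_D^k[\phi]|_\pm = (\pm\tfrac12 I + (\mathcal{K}_D^k)^*)[\phi]$, collapses at order $\delta^0$ to the single equation $(-\tfrac12 I + (\mathcal{K}_D^0)^*)[\phi] = 0$. Hence $\phi$ lies in the $N$-dimensional kernel of $-\tfrac12 I + (\mathcal{K}_D^0)^*$, which is spanned by the densities $\psi_i := (\mathcal{S}_D^0)^{-1}[\chi_{\D_i}]$, so we may write $\phi = \sum_{i=1}^N a_i \psi_i + O(\sqrt{\delta})$ for some coefficient vector $a\in\mathbb{R}^N$.

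Next I would feed this ansatz into the next-order term of the flux condition and integrate over each boundary component $\D_i$ separately. Using $(\tfrac12 I + (\mathcal{K}_D^0)^*)[\phi] = \phi$ (a consequence of the kernel condition just established) and \Cref{def:cap}, the $\delta\,\partial_\nu u|_+$ contribution evaluates to $-\delta\sum_j C_{ij}a_j$. For the $\partial_\nu u|_-$ contribution I would apply the divergence theorem to $(\Delta+k^2)u_n = 0$ inside $D_i$, giving $-k^2\int_{D_i}u_n\de x \approx -k^2|D_i|a_i$ because $\mathcal{S}_D^0[\psi_j]|_{D_i} = \delta_{ij}$ (harmonic extension with boundary value $\delta_{ij}$), so $u_n\approx a_i$ on $D_i$ at leading order. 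Balancing the two contributions yields the finite-dimensional eigenvalue equation $V^2 C a = (\omega_n^2/(\delta v^2))\,a$, and the substitution $a = V\mathbf{w}$ symmetrises this to $\mathcal{C}\mathbf{w} = \lambda_n\mathbf{w}$. Thus $a = V\mathbf{v}_n$ up to a scalar, with the eigenvalue matching $\lambda_n = \omega_n^2/(\delta v^2)$ from \Cref{thm:res}.

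Substituting back gives $u_n(x) = \sum_i (V\mathbf{v}_n)_i\,\mathcal{S}_D^k[\psi_i](x) + O(\sqrt{\delta}) = \mathbf{v}_n^\top V\mathbf{S}_D^k(x) + O(\sqrt{\delta})$ inside $D$, and the exterior formula follows identically using $\psi = \phi + O(\sqrt{\delta})$. To fix the scale, I would observe that the leading-order mode is piecewise constant on $D$ with value $(\mathbf{v}_n)_i/\sqrt{|D_i|}$ on $D_i$, so $\|u_n\|_{L^2(D)}^2 \approx \sum_i(\mathbf{v}_n)_i^2 = \|\mathbf{v}_n\|^2 = 1$: the $V$-weighted eigenvector therefore produces an already-normalised mode, so no extra prefactor appears. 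I expect the main obstacle to be the order-tracking in the third step — specifically, showing that the effective matrix is precisely $V^2C$ rather than some other weighting, as this is the point at which the capacitance matrix of \Cref{def:cap} couples to the geometry through the resonator volumes. Once this identification is made, the symmetrisation to $\mathcal{C}$ and the match with \Cref{thm:res} are immediate, and the complete calculation is worked through in \cite{capacitance}.
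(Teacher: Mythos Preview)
The paper does not actually prove this corollary: it is stated immediately after \Cref{thm:res} without a proof environment, and the surrounding text defers the derivation of the capacitance-matrix theory to \cite{capacitance}. Your reconstruction is the standard argument from that reference --- layer-potential ansatz, leading-order kernel identification $(-\tfrac12 I+(\mathcal{K}_D^0)^*)[\phi]=0$, integration of the next-order flux condition over each $\D_i$ to produce the finite eigenvalue problem, then the symmetrisation $a=V\mathbf{w}$ --- and it is correct, including the normalisation check at the end. So there is nothing to compare against in the paper itself; your sketch simply fills in what the paper outsources.
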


\begin{remark}
Since $C$ is symmetric, $V$ is diagonal and $J$ is positive semi-definite, it holds that $\tau_n\geq0$ for all $n=1,\dots,N$. This corresponds to the loss of energy from the system.
\end{remark}

\begin{remark}
We will shortly want to study how the properties of the generalized capacitance matrix $\C$ vary when changes are made to the structure $D$. For this reason, we will often write $\C=\C(D)$ to emphasise the dependence of the generalized capacitance matrix on the geometry of $D$. Similarly, we will write $\lambda_i=\lambda_i(D)$ and $\tau_i=\tau_i(D)$ for the quantities from \Cref{thm:res}.
\end{remark}

\section{Imperfections in the device} \label{sec:imperfections}

We will begin by deriving formulas to describe the effects of making small perturbations to the positions and sizes of the resonators, as depicted in \Cref{fig:size&pos}. Perturbations of this nature are important as they will be introduced when a device is manufactured. The results in this section give quantitative estimates on the extent to which the perturbations of the structure's properties are stable with respect to small imperfections.

\subsection{Dilute approximations} \label{sec:dilute}

In order to simplify the analysis, and to allow us to work with explicit formulas, we will make an assumption that the resonators are small compared to the distance between them. In particular, we will assume that each resonator $D_i$ is given by $B_i+\epsilon^{-1}z_i$ where $B_i\subset\mathbb{R}^3$ is some fixed domain, $z_i\in\mathbb{R}^3$ is some fixed vector and $0<\epsilon\ll1$ is some small parameter. We will assume that each fixed domain $B_i$, for $i=1,\dots,N$, is positioned so that it contains the origin and that the complete structure is given by
\begin{equation} \label{eq:dilute}
    D=\bigcup_{i=1}^N D_i, \qquad D_i= \left( B_i + \epsilon^{-1} z_i \right).
\end{equation}
Under this assumption, the generalized capacitance matrix has an explicit leading-order asymptotic expression in terms of the \emph{dilute} generalized capacitance matrix:

\begin{defn}[Dilute generalized capacitance matrix] \label{defn:diluteGCM}
Given $0<\epsilon\ll1$ and a resonator array that is $\epsilon$-dilute in the sense of \eqref{eq:dilute}, the associated dilute generalized capacitance matrix $\C^\epsilon\in\mathbb{R}^{N\times N}$ is defined as
\begin{equation*}
    \C_{ij}^\epsilon=
    \begin{cases}
\frac{\mathrm{Cap}_{B_{i}}}{|B_i|}, & i=j, \\
-\epsilon\frac{\mathrm{Cap}_{B_{i}} \mathrm{Cap}_{B_{j}}}{4\pi |z_{i}-z_{j}|\sqrt{|B_i||B_j|}}, & i\neq j,
\end{cases}
\end{equation*}
where we define the capacitance $\mathrm{Cap}_B$ of a set $B\subset\mathbb{R}^3$ as 
\begin{equation*}
    \mathrm{Cap}_{B_i}:=-\int_{\partial B} (\S_B^0)^{-1}[\chi_{\partial B}]\de\sigma.
\end{equation*}
\end{defn}

\begin{lemma}\label{diluteness}
Consider a resonator array that is $\epsilon$-dilute in the sense of \eqref{eq:dilute}. In the limit as $\epsilon \rightarrow 0$, the asymptotic behaviour of the (symmetric) generalized capacitance matrix is given by
\begin{equation*}
    \C=\C^\epsilon+O(\epsilon^2) \quad\text{as}\quad\epsilon\to0.
\end{equation*}
\end{lemma}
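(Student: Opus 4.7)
The plan is to compute $\mathcal{C}=VCV$ by asymptotically inverting the operator $\mathcal{S}_D^0$ on the dilute array. Write $\psi_j:=(\mathcal{S}_D^0)^{-1}[\chi_{\partial D_j}]$, so that $C_{ij}=-\int_{\partial D_i}\psi_j\,\de\sigma$. Decompose each density as $\psi_j=\sum_{k=1}^N \psi_j^{(k)}$ with $\supp\psi_j^{(k)}\subset\partial D_k$, and translate to the reference domain by letting $\tilde\psi_j^{(k)}(x):=\psi_j^{(k)}(x+\epsilon^{-1}z_k)$ for $x\in\partial B_k$. Restricting $\mathcal{S}_D^0[\psi_j]=\chi_{\partial D_j}$ to $\partial D_i$ yields the coupled system
\begin{equation*}
\mathcal{S}_{B_i}^0[\tilde\psi_j^{(i)}](x)+\sum_{k\neq i}\mathcal{S}_{D_k}^0[\psi_j^{(k)}](x+\epsilon^{-1}z_i)=\delta_{ij},\qquad x\in\partial B_i.
\end{equation*}

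Next I would exploit the diluteness: for $k\neq i$ and $x\in\partial B_i$, $y\in\partial B_k$, one has $|x+\epsilon^{-1}z_i-(y+\epsilon^{-1}z_k)|=\epsilon^{-1}|z_i-z_k|+O(1)$, so a Taylor expansion of $G^0$ gives
\begin{equation*}
\mathcal{S}_{D_k}^0[\psi_j^{(k)}](x+\epsilon^{-1}z_i)=-\frac{\epsilon}{4\pi|z_i-z_k|}\int_{\partial B_k}\tilde\psi_j^{(k)}\,\de\sigma+O(\epsilon^2),
\end{equation*}
uniformly in $x$, because the dilated resonators are bounded and the higher-order terms in $\epsilon|x-y-\epsilon^{-1}(z_i-z_k)|/|z_i-z_k|$ are $O(\epsilon^2)$. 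Plugging this into the system and using that $\mathcal{S}_{B_i}^0$ is invertible from $L^2(\partial B_i)$ to $H^1(\partial B_i)$ produces a block-diagonal plus $O(\epsilon)$-perturbation structure that can be inverted by Neumann series.

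To leading order the system decouples into $\mathcal{S}_{B_i}^0[\tilde\psi_j^{(i),(0)}]=\delta_{ij}\mathbbm{1}$, so $\tilde\psi_j^{(i),(0)}=\delta_{ij}(\mathcal{S}_{B_i}^0)^{-1}[\chi_{\partial B_i}]$ and $\int_{\partial B_k}\tilde\psi_j^{(k),(0)}\,\de\sigma=-\delta_{jk}\,\mathrm{Cap}_{B_k}$. Substituting back into the interaction term gives the first-order correction: for $i\neq j$,
\begin{equation*}
\tilde\psi_j^{(i)}=\frac{\epsilon\,\mathrm{Cap}_{B_j}}{4\pi|z_i-z_j|}\,(\mathcal{S}_{B_i}^0)^{-1}[\chi_{\partial B_i}]+O(\epsilon^2),
\end{equation*}
whence $C_{ij}=-\int_{\partial B_i}\tilde\psi_j^{(i)}\,\de\sigma=-\epsilon\,\mathrm{Cap}_{B_i}\mathrm{Cap}_{B_j}/(4\pi|z_i-z_j|)+O(\epsilon^2)$. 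For $i=j$, the correction from the interaction sum is already $O(\epsilon)\cdot O(\epsilon)=O(\epsilon^2)$ since the leading-order off-diagonal integrals vanish, so $C_{ii}=\mathrm{Cap}_{B_i}+O(\epsilon^2)$. Multiplying by $V_{ii}V_{jj}=(|B_i||B_j|)^{-1/2}$ (using $|D_i|=|B_i|$) delivers exactly $\mathcal{C}^\epsilon$ at the displayed orders.

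The main technical obstacle is making the remainder genuinely $O(\epsilon^2)$ in operator norm rather than pointwise: this requires controlling the Neumann series for the block operator on $\bigoplus_i L^2(\partial B_i)$, which follows because the off-diagonal blocks have operator norm $O(\epsilon)$ (their kernels are bounded by $C\epsilon$) and the diagonal blocks $\mathcal{S}_{B_i}^0$ are uniformly boundedly invertible. Once that is in place, summing a geometric series gives a uniform $O(\epsilon^2)$ bound on $\|\psi_j-\psi_j^{(0)}-\epsilon\psi_j^{(1)}\|_{L^2(\partial D)}$, which transfers to the integrated quantities $C_{ij}$ and then to $\mathcal{C}$ by the bounded scaling $V$.
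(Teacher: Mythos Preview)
Your argument is correct and is in fact the standard computation carried out in the references the paper invokes. The paper itself does not give a proof of this lemma: its entire proof reads ``This was proved in \cite{ammari2021high} and is a modification of a result from \cite{ammari2020topologically}.'' What you have written is essentially a reconstruction of that cited argument---decompose $(\S_D^0)^{-1}[\chi_{\partial D_j}]$ block-wise over the resonator boundaries, expand the cross-block kernels using $|x-y+\epsilon^{-1}(z_i-z_k)|^{-1}=\epsilon|z_i-z_k|^{-1}+O(\epsilon^2)$, and invert the resulting diagonal-plus-$O(\epsilon)$ block operator by a Neumann series.

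One minor slip: your displayed formula for $\tilde\psi_j^{(i)}$ when $i\neq j$ is missing a minus sign. Solving $\S_{B_i}^0[\tilde\psi_j^{(i)}]+\frac{\epsilon\,\mathrm{Cap}_{B_j}}{4\pi|z_i-z_j|}=O(\epsilon^2)$ gives $\tilde\psi_j^{(i)}=-\frac{\epsilon\,\mathrm{Cap}_{B_j}}{4\pi|z_i-z_j|}(\S_{B_i}^0)^{-1}[\chi_{\partial B_i}]+O(\epsilon^2)$. Your final expression for $C_{ij}$ is nonetheless correct, because integrating against $-1$ restores the sign via $\int_{\partial B_i}(\S_{B_i}^0)^{-1}[\chi_{\partial B_i}]\,\de\sigma=-\mathrm{Cap}_{B_i}$. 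The remainder control you describe---uniform invertibility of the diagonal blocks $\S_{B_i}^0$ and $O(\epsilon)$ operator-norm bounds on the off-diagonal blocks giving a convergent Neumann series---is exactly how the $O(\epsilon^2)$ error is justified in the cited works.
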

\begin{proof}
This was proved in \cite{ammari2021high} and is a modification of a result from \cite{ammari2020topologically}.
\end{proof}

\begin{remark}
It would also be possible to state an appropriate diluteness condition as a rescaling of the sizes of the resonators, by taking $D_i=\epsilon B_i+z_i$ in \eqref{eq:dilute}. This would give analogous results, as used in \cite{ammari2020topologically}.
\end{remark}

\subsection{Changes in size} \label{sec:size}

\begin{figure}
    \begin{center}
        \includegraphics[width=0.6\linewidth]{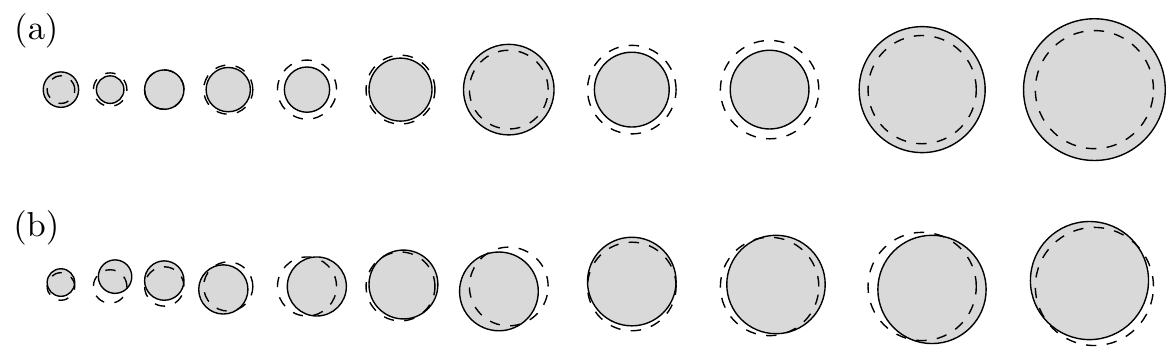}
    \end{center}
    \caption{We study the effects of adding random perturbations to the (a) size and (b) position of the resonators in a cochlea-inspired rainbow sensor. The original structure is shown in dashed.} \label{fig:size&pos}
\end{figure}

We first consider imperfections due to changes in the size of the resonators. In particular, suppose there exist some factors $\alpha_1,\dots,\alpha_N$ such that the perturbed structure is given by
\begin{equation} \label{eq:sizedef}
    D^{(\alpha)}=\bigcup_{i=1}^N \left((1+\alpha_i)B_i+\epsilon^{-1}z_i\right).
\end{equation}
We will assume that the perturbations $\alpha_1,\dots,\alpha_N$ are small in the sense that there exists some parameter $\alpha$ such that $\alpha_i=O(\alpha)$ as $\alpha\to0$.

\begin{lemma} \label{lem:sizeperturb}
Suppose that a resonator array $D$ is deformed to give $D^{(\alpha)}$, as defined in \eqref{eq:sizedef}, and that the size change parameters $\alpha_1,\dots,\alpha_N$ satisfy $\alpha_i=O(\alpha)$ as $\alpha\to0$ for all $i=1,\dots,N$. Then, the dilute generalized capacitance matrix associated to $D^{(\alpha)}$is given by
\begin{equation*}
    \C^\epsilon(D^{(\alpha)})=\C^\epsilon(D)+A(\alpha),
\end{equation*}
where $A(\alpha)$ is a symmetric ${N\times N}$-matrix whose Frobenius norm satisfies $\|A\|_F=O(\alpha)$ as $\alpha\to0$.
\end{lemma}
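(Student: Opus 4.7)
The plan is to work entry-by-entry with the explicit formula in \Cref{defn:diluteGCM} for $\C^\epsilon$ and simply substitute the dilated domains $(1+\alpha_i)B_i$. This reduces everything to understanding how the two ingredients appearing in the definition, the volume $|B|$ and the capacitance $\mathrm{Cap}_B$, transform under isotropic rescaling of $B$. Observe that the positions $z_i$ do not move, so the distances $|z_i - z_j|$ appearing in the off-diagonal entries are untouched, and the dilations are about the origin (which is contained in each $B_i$), so the dilated domains $(1+\alpha_i)B_i + \epsilon^{-1}z_i$ fit into the framework of \eqref{eq:dilute} without any reshuffling.

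The key scaling laws are $|tB|=t^3|B|$, which is trivial, and $\mathrm{Cap}_{tB} = t\,\mathrm{Cap}_{B}$, which follows from a change of variables: since $G^0(tx) = t^{-1}G^0(x)$ and $\de\sigma$ on $\partial(tB)$ pulls back to $t^2\de\sigma$ on $\partial B$, one checks directly that $(\S_{tB}^0)^{-1}[\chi_{\partial(tB)}](y) = t^{-1}(\S_B^0)^{-1}[\chi_{\partial B}](t^{-1}y)$, and integrating against $\de\sigma$ produces the factor of $t$. Plugging $t=1+\alpha_i$ into \Cref{defn:diluteGCM} yields the closed-form ratios
\begin{equation*}
    \C^\epsilon(D^{(\alpha)})_{ii} = (1+\alpha_i)^{-2}\,\C^\epsilon(D)_{ii}, \qquad
    \C^\epsilon(D^{(\alpha)})_{ij} = (1+\alpha_i)^{-1/2}(1+\alpha_j)^{-1/2}\,\C^\epsilon(D)_{ij} \quad (i\neq j),
\end{equation*}
where for the off-diagonal the numerator contributes $(1+\alpha_i)(1+\alpha_j)$ from the two capacitances and the denominator contributes $(1+\alpha_i)^{3/2}(1+\alpha_j)^{3/2}$ from $\sqrt{|B_i||B_j|}$.

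Setting $A(\alpha) := \C^\epsilon(D^{(\alpha)}) - \C^\epsilon(D)$ and Taylor expanding, $(1+\alpha_i)^{-2} = 1 - 2\alpha_i + O(\alpha_i^2)$ and $(1+\alpha_i)^{-1/2}(1+\alpha_j)^{-1/2} = 1 - \tfrac12(\alpha_i+\alpha_j) + O(\alpha^2)$. Under the hypothesis $\alpha_i = O(\alpha)$, each entry of $A(\alpha)$ is therefore $O(\alpha)$; since $N$ is fixed this gives $\|A\|_F = O(\alpha)$. Symmetry of $A$ is inherited from the manifest symmetry of $\C^\epsilon$ in \Cref{defn:diluteGCM}. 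There is no real obstacle here beyond confirming the linear scaling of capacitance; the rest is a direct computation, and one does not even need to invoke \Cref{diluteness} because the assertion is made purely at the level of the dilute matrix.
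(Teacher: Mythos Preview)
Your proof is correct and follows essentially the same approach as the paper: substitute $B_i\mapsto(1+\alpha_i)B_i$ into \Cref{defn:diluteGCM}, use the scaling laws $|tB|=t^3|B|$ and $\mathrm{Cap}_{tB}=t\,\mathrm{Cap}_B$ to obtain the factors $(1+\alpha_i)^{-2}$ and $(1+\alpha_i)^{-1/2}(1+\alpha_j)^{-1/2}$, and Taylor expand. You are slightly more explicit than the paper in justifying the linear scaling of capacitance via a change of variables, but otherwise the arguments coincide.
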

\begin{proof}
Making the substitution $B_i\mapsto (1+\alpha_i)B_i$ in \Cref{defn:diluteGCM} gives 
\begin{equation*}
    \C_{ij}^\epsilon(D^{(\alpha)})=
    \begin{cases}
 \frac{\mathrm{Cap}_{B_{i}}}{(1+\alpha_i)^2|B_i|}, & i=j, \\
-\epsilon\frac{\mathrm{Cap}_{B_{i}} \mathrm{Cap}_{B_{j}}}{4\pi |z_{i}-z_{j}|\sqrt{(1+\alpha_i)(1+\alpha_j)}\sqrt{|B_i||B_j|}}, & i\neq j.
\end{cases}
\end{equation*}
For small $\alpha$ we can expand the denominators (while keeping $\epsilon$ fixed) to give
\begin{equation*}
    \C_{ij}^\epsilon(D^{(\alpha)})=
    \begin{cases}
 (1-2\alpha_i)\frac{\mathrm{Cap}_{B_{i}}}{2|B_i|}+O(\alpha^2), & i=j, \\
-\epsilon\left(1-\frac{1}{2}(\alpha_i+\alpha_j)\right)\frac{\mathrm{Cap}_{B_{i}} \mathrm{Cap}_{B_{j}}}{4\pi |z_{i}-z_{j}|\sqrt{|B_i||B_j|}}+O(\alpha^2), & i\neq j,
\end{cases}
\end{equation*}
as $\alpha\to0$.
\end{proof}

\begin{thm} \label{thm:size}
    Suppose that a resonator array $D$ is $\epsilon$-dilute in the sense of \eqref{eq:dilute} and is deformed to give $D^{(\alpha)}$, as defined in \eqref{eq:sizedef}, for size change parameters $\alpha_1,\dots,\alpha_N$ which satisfy $\alpha_i=O(\alpha)$ as $\alpha\to0$ for all $i=1,\dots,N$. Then, the resonant frequencies satisfy
    \begin{equation*}
        |\omega_n(D)-\omega_n(D^{(\alpha)})|=O\left(\sqrt{\delta(\alpha+\epsilon^2)}\right).\qedhere
    \end{equation*}
    as $\alpha,\delta,\epsilon\to0$.
\end{thm}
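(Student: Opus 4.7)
The plan is to use \Cref{thm:res} to reduce the problem to a perturbation bound on the eigenvalues of the symmetric generalized capacitance matrices $\C(D)$ and $\C(D^{(\alpha)})$. Writing the asymptotic expansion of \Cref{thm:res} for both configurations and subtracting gives
\begin{equation*}
\omega_n(D) - \omega_n(D^{(\alpha)}) = \sqrt{\delta v^2}\bigl(\sqrt{\lambda_n(D)} - \sqrt{\lambda_n(D^{(\alpha)})}\bigr) - \iu\delta\bigl(\tau_n(D) - \tau_n(D^{(\alpha)})\bigr) + O(\delta^{3/2}),
\end{equation*}
so the claim will follow once I control the three contributions on the right-hand side.

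First I would control the difference of the full generalized capacitance matrices by inserting the dilute approximation as a telescoping sum:
\begin{equation*}
\C(D^{(\alpha)}) - \C(D) = \bigl(\C(D^{(\alpha)}) - \C^\epsilon(D^{(\alpha)})\bigr) + A(\alpha) + \bigl(\C^\epsilon(D) - \C(D)\bigr),
\end{equation*}
where the outer two brackets are $O(\epsilon^2)$ by \Cref{diluteness} and the middle term is $O(\alpha)$ in Frobenius norm by \Cref{lem:sizeperturb}. Since both matrices are real symmetric, Weyl's inequality then yields $|\lambda_n(D) - \lambda_n(D^{(\alpha)})| = O(\alpha + \epsilon^2)$. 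Applying the elementary inequality $|\sqrt{x} - \sqrt{y}| \leq \sqrt{|x - y|}$ for $x, y \geq 0$ to the leading-order term produces
\begin{equation*}
\bigl|\sqrt{\delta v^2 \lambda_n(D)} - \sqrt{\delta v^2 \lambda_n(D^{(\alpha)})}\bigr| = O\bigl(\sqrt{\delta(\alpha + \epsilon^2)}\bigr),
\end{equation*}
which is precisely the stated rate. Note that the square-root inequality is what avoids needing a lower bound on $\lambda_n$, which matters for the smallest eigenvalue where a direct Lipschitz estimate on $\sqrt{\,\cdot\,}$ would fail.

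The remaining contributions are strictly lower order: $\tau_n$ depends continuously on the geometry through the formula in \Cref{thm:res}, so $|\tau_n(D) - \tau_n(D^{(\alpha)})| = O(\alpha + \epsilon^2)$ and the second term is $O(\delta(\alpha + \epsilon^2))$, while the residual $O(\delta^{3/2})$ is that from applying \Cref{thm:res} to each configuration. Both are dominated by $O(\sqrt{\delta(\alpha + \epsilon^2)})$ in the joint asymptotic regime. The main obstacle I anticipate is verifying that the implied constants in \Cref{diluteness} and \Cref{lem:sizeperturb}, and in the expansion of \Cref{thm:res}, can be chosen uniformly in $\alpha$ for small $\alpha$; this should hold because the reference domains $B_i$ and positions $z_i$ are fixed, so the constants depend only on this fixed data, but it does warrant careful bookkeeping through the preliminary results.
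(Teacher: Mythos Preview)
Your proposal is correct and follows essentially the same route as the paper: reduce to an eigenvalue perturbation bound on the symmetric generalized capacitance matrix via \Cref{diluteness} and \Cref{lem:sizeperturb}, then apply $|\sqrt{x}-\sqrt{y}|\leq\sqrt{|x-y|}$ inside the leading-order formula from \Cref{thm:res}. The only cosmetic differences are that the paper telescopes at the level of eigenvalues (bounding $|\lambda_n-\lambda_n^\epsilon|$ and $|\lambda_n^\epsilon(D)-\lambda_n^\epsilon(D^{(\alpha)})|$ separately) and invokes the Wielandt--Hoffman theorem rather than Weyl's inequality, while absorbing the $\tau_n$ contribution into a blanket $O(\delta)$; the paper itself remarks that other eigenvalue perturbation results would serve equally well, so your choice of Weyl is entirely in keeping with the intended argument.
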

\begin{proof}
From \Cref{lem:sizeperturb} we have that $\C^\epsilon(D^{(\alpha)})=\C^\epsilon(D)+A(\alpha)$ where $A$ is a symmetric $N\times N$-matrix. Then, by the Wielandt-Hoffman theorem \cite{golub2983matrix}, it holds that the eigenvalues of $\C^\epsilon(D)$ and $\C^\epsilon(D^{(\alpha)})$, which we denote by $\lambda_i^\epsilon(D)$ and $\lambda_i^\epsilon(D^{(\alpha)})$, respectively, satisfy
\begin{equation} \label{eq:WielandtHoffman}
    \sum_{n=1}^N \left(\lambda_n^\epsilon(D)-\lambda_n^\epsilon(D^{(\alpha)})\right)^2\leq \|A\|_F^2.
\end{equation}
From this we can see that $|\lambda_i^\epsilon(D)-\lambda_i^\epsilon(D^{(\alpha)})|=O(\alpha)$ as $\alpha\to0$, since $\|A\|_F=O(\alpha)$ as $\alpha\to0$ by \Cref{lem:sizeperturb}. By a similar argument, and using \Cref{diluteness}, we have that
\begin{equation} \label{eq:dilute_eigenvalues}
    |\lambda_n(D)-\lambda_n^\epsilon(D)|=O(\epsilon^2)
    \quad\text{and}\quad
    |\lambda_n(D^{(\alpha)})-\lambda_n^\epsilon(D^{(\alpha)})|=O(\epsilon^2), \quad \text{as }\epsilon\to0.
\end{equation}
Finally, we use \Cref{thm:res} to find the resonant frequencies:
\begin{align*}
    |\omega_n(D)-\omega_n(D^{(\alpha)})|
    &=\left|\sqrt{\delta v^2\lambda_n(D)}-\sqrt{\delta v^2\lambda_n(D^{(\alpha)})}\right|+O(\delta)\\
    &\leq \sqrt{\delta v^2}\sqrt{\left| \lambda_n(D)-\lambda_n(D^{(\alpha)}) \right|}+O(\delta).\\
    &\leq \sqrt{\delta v^2}\sqrt{\left| \lambda_n(D)-\lambda_n^\epsilon(D) \right|+\left| \lambda_n^\epsilon(D)-\lambda_n^\epsilon(D^{(\alpha)}) \right|+\left| \lambda_n^\epsilon(D^{(\alpha)})-\lambda_n(D^{(\alpha)}) \right|}+O(\delta).
\end{align*}
Combining this with \eqref{eq:WielandtHoffman} and \eqref{eq:dilute_eigenvalues} gives the result.
\end{proof}

\begin{remark}
While the Wielandt-Hoffman theorem was used in \eqref{eq:WielandtHoffman}, there are a range of results that could be invoked here. For example, if $\lambda_{\text{min}}$ and $\lambda_{\text{max}}$ are the smallest and largest eigenvalues of $A$ then it holds that 
\begin{equation*}
    \lambda_n^\epsilon(D)+\lambda_{\text{min}}\leq\lambda_i^\epsilon(D^{(\alpha)})\leq \lambda_n^\epsilon(D)+\lambda_{\text{max}},
\end{equation*}
for all $n=1,\dots,N$. For a selection of results on perturbations of eigenvalues of symmetric metrices, see \cite{golub2983matrix}.
\end{remark}

\begin{figure}
    \centering
    \begin{subfigure}{0.49\linewidth}
        \includegraphics[width=\linewidth]{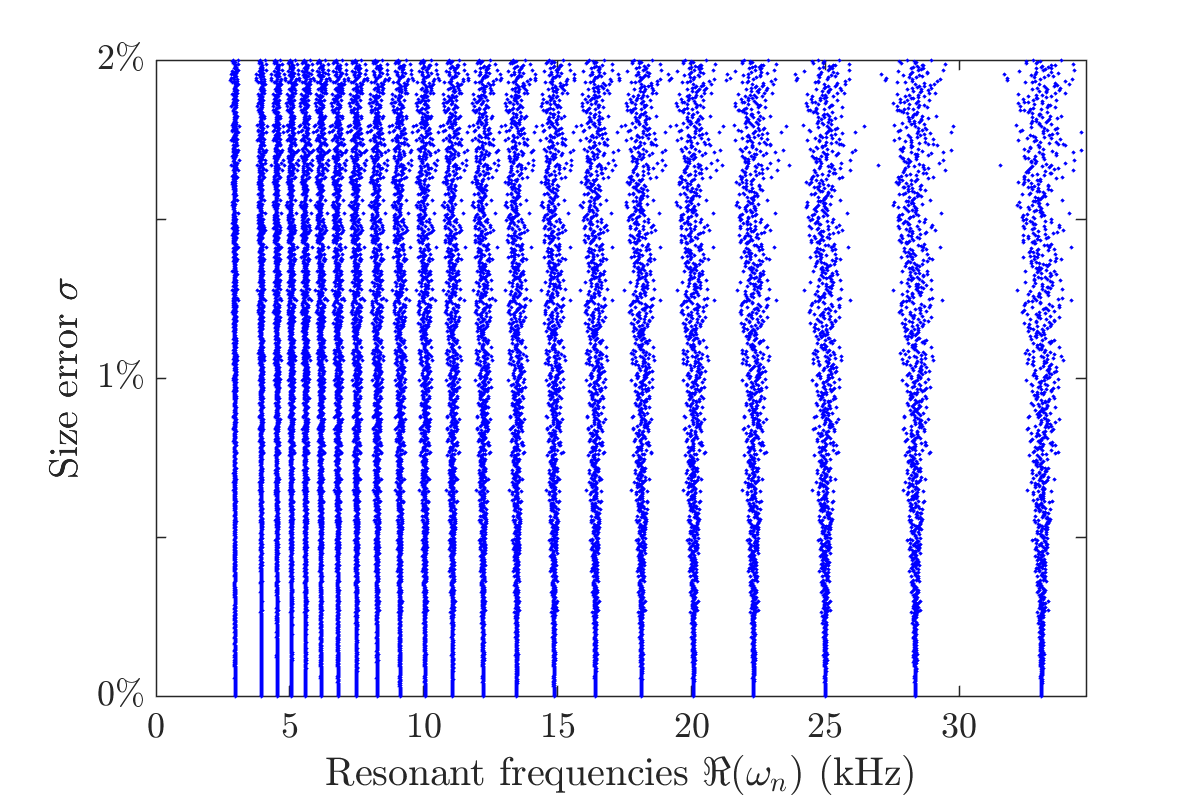}
        \caption{}
    \end{subfigure}
    \begin{subfigure}{0.49\linewidth}
        \includegraphics[width=\linewidth]{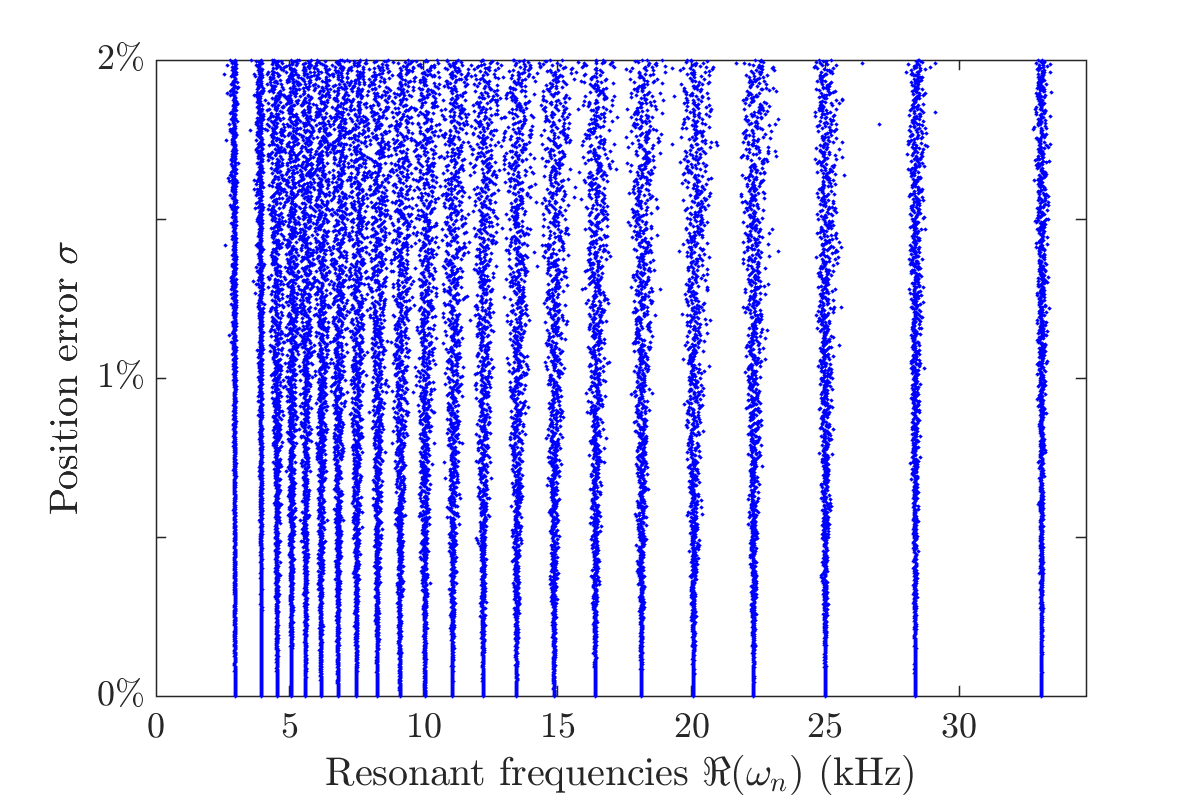}
        \caption{}
    \end{subfigure}
    \caption{The effect of random errors and imperfections on the subwavelength resonant frequencies of a cochlea-inspired rainbow sensor. (a) Random errors are added to the sizes of the resonators. (b) Random errors are added to the positions of the resonators. In both cases the errors are Gaussian with mean zero and variance $\sigma^2$.}
    \label{fig:imperfections}
\end{figure}

\subsection{Changes in position}

Let's now consider imperfections due to changes in the positions of the resonators. In particular, suppose there exist some vectors $\beta_1,\dots,\beta_N\in\mathbb{R}^3$ such that the perturbed structure is given by
\begin{equation} \label{eq:posdef}
    D^{(\beta)}=\bigcup_{i=1}^N \left( B_i+\epsilon^{-1}(z_i+\beta_i)\right).
\end{equation}
We will assume that the perturbations $\beta_1,\dots,\beta_N$ are small in the sense that there exists some parameter $\beta\in\mathbb{R}$ such that $\|\beta_i\|=O(\beta)$ as $\beta\to0$. We will proceed as in \Cref{sec:size}, by considering the dilute generalized capacitance matrix $\C^\epsilon$.

\begin{lemma} \label{lem:posperturb}
Suppose that a resonator array $D$ is deformed to give $D^{(\beta)}$, as defined in \eqref{eq:posdef}, and that the translation vectors $\beta_1,\dots,\beta_N$ satisfy $\|\beta_i\|=O(\beta)$ as $\beta\to0$ for all $i=1,\dots,N$. Then, the dilute generalized capacitance matrix associated to $D^{(\beta)}$is given by
\begin{equation*}
    \C^\epsilon(D^{(\beta)})=\C^\epsilon(D)+B(\beta),
\end{equation*}
where $B(\beta)$ is a symmetric ${N\times N}$-matrix whose Frobenius norm satisfies $\|B\|_F=O(\beta)$ as $\beta\to0$.
\end{lemma}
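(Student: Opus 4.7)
The plan is to mirror the proof of \Cref{lem:sizeperturb}, making the substitution $z_i\mapsto z_i+\beta_i$ in \Cref{defn:diluteGCM} and Taylor expanding the resulting off-diagonal entries. A key observation is that the diagonal entries $\C^\epsilon_{ii}=\mathrm{Cap}_{B_i}/|B_i|$ depend only on the fixed shapes $B_i$ and so are completely unchanged by a translation; all the perturbation is therefore concentrated in the off-diagonal entries.

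First I would write
\begin{equation*}
    \C_{ij}^\epsilon(D^{(\beta)})=-\epsilon\frac{\mathrm{Cap}_{B_i}\mathrm{Cap}_{B_j}}{4\pi\,|z_i-z_j+\beta_i-\beta_j|\sqrt{|B_i||B_j|}}, \qquad i\neq j,
\end{equation*}
and expand $|z_i-z_j+\beta_i-\beta_j|^{-1}$ for small $\beta$. Since the points $z_i$ are fixed and distinct, $|z_i-z_j|$ is bounded away from zero for $i\neq j$, so for $\beta$ sufficiently small we may write
\begin{equation*}
    \frac{1}{|z_i-z_j+\beta_i-\beta_j|}=\frac{1}{|z_i-z_j|}-\frac{(z_i-z_j)\cdot(\beta_i-\beta_j)}{|z_i-z_j|^3}+O(\beta^2)
\end{equation*}
by a standard Taylor expansion of $w\mapsto |w|^{-1}$ around $w=z_i-z_j$. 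Substituting back gives $\C^\epsilon_{ij}(D^{(\beta)})=\C^\epsilon_{ij}(D)+B_{ij}(\beta)$ with explicit entries
\begin{equation*}
    B_{ij}(\beta)=\epsilon\,\frac{\mathrm{Cap}_{B_i}\mathrm{Cap}_{B_j}}{4\pi\sqrt{|B_i||B_j|}}\cdot\frac{(z_i-z_j)\cdot(\beta_i-\beta_j)}{|z_i-z_j|^3}+O(\beta^2), \qquad i\neq j,
\end{equation*}
and $B_{ii}(\beta)=0$.

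From this expression symmetry of $B(\beta)$ is immediate: swapping $i$ and $j$ flips the sign of both $\mathrm{Cap}_{B_i}\mathrm{Cap}_{B_j}/\sqrt{|B_i||B_j|}$-free numerator factors $(z_i-z_j)$ and $(\beta_i-\beta_j)$, whose dot product is therefore invariant. Finally, $|B_{ij}(\beta)|=O(\beta)$ entrywise (with the $\epsilon$ factor being a harmless constant for the purposes of this lemma, as in \Cref{lem:sizeperturb}), so summing $N^2-N$ squared entries each of size $O(\beta^2)$ yields $\|B(\beta)\|_F=O(\beta)$ as $\beta\to 0$, completing the proof. The only mild obstacle is ensuring the Taylor expansion is uniform in $i,j$, which follows at once from the finiteness of $N$ and the fact that $\min_{i\neq j}|z_i-z_j|>0$; no deeper analytical input beyond what was already used for the size perturbation is required.
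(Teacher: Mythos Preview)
Your proof is correct and follows essentially the same approach as the paper: substitute $z_i\mapsto z_i+\beta_i$ in \Cref{defn:diluteGCM}, observe that the diagonal entries are unchanged, and Taylor expand $|z_i-z_j+\beta_i-\beta_j|^{-1}$ to first order in $\beta$ to obtain the $O(\beta)$ off-diagonal perturbation. Your version is slightly more detailed than the paper's in that you explicitly verify symmetry of $B(\beta)$ and the Frobenius norm bound, as well as the uniformity of the expansion, but the core argument is identical.
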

\begin{proof} We will make the substitution $z_i\mapsto z_i+\beta_i$ in \Cref{defn:diluteGCM}. The diagonal entries of $\C^\epsilon$ are unchanged. For the off-diagonal entries, we have that
\begin{equation*}
    \C_{ij}^\epsilon(D^{(\beta)})=
-\epsilon\frac{\mathrm{Cap}_{B_{i}} \mathrm{Cap}_{B_{j}}}{4\pi |z_{i}+\beta_i-z_{j}-\beta_j|\sqrt{|B_i||B_j|}}, \qquad i\neq j.
\end{equation*}
For small $\beta$ we can expand the denominator to give
\begin{equation*}
    \frac{1}{|z_{i}+\beta_i-z_{j}-\beta_j|}=\frac{1}{|z_{i}-z_{j}|}-(\beta_i-\beta_j)\cdot\frac{z_i-z_j}{|z_i-z_j|^3}+O(\beta^2),\qquad i\neq j,
\end{equation*}
as $\beta\to0$. This gives us that
\begin{equation*}
    \C_{ij}^\epsilon(D^{(\beta)})=\C_{ij}^\epsilon(D)
    +\epsilon(\beta_i-\beta_j)\cdot\frac{(z_i-z_j)\mathrm{Cap}_{B_{i}} \mathrm{Cap}_{B_{j}}}{4\pi |z_{i}-z_{j}|^3\sqrt{|B_i||B_j|}}+O(\beta^2), \quad i\neq j,
\end{equation*}
as $\beta\to0$.
\end{proof}

\begin{thm}
    Suppose that a resonator array $D$ is $\epsilon$-dilute in the sense of \eqref{eq:dilute} and is deformed to give $D^{(\beta)}$, as defined in \eqref{eq:posdef}, for translation vectors $\beta_1,\dots,\beta_N$ which satisfy $\|\beta_i\|=O(\beta)$ as $\beta\to0$ for all $i=1,\dots,N$. Then the resonant frequencies satisfy
    \begin{equation*}
        |\omega_n(D)-\omega_n(D^{(\beta)})|=O\left(\sqrt{\delta(\beta+\epsilon^2)}\right).\qedhere
    \end{equation*}
    as $\beta,\delta,\epsilon\to0$.
\end{thm}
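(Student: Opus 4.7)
The plan is to mirror the proof of \Cref{thm:size} essentially verbatim, with \Cref{lem:posperturb} playing the role that \Cref{lem:sizeperturb} played in the size-perturbation setting. Since \Cref{lem:posperturb} already establishes that the dilute generalized capacitance matrix is perturbed by a symmetric matrix $B(\beta)$ with $\|B\|_F = O(\beta)$, the hard work is done up front and the remaining argument is a short chain of standard eigenvalue-perturbation and asymptotic-expansion estimates.

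First I would apply the Wielandt--Hoffman theorem to the identity $\C^\epsilon(D^{(\beta)}) = \C^\epsilon(D) + B(\beta)$ from \Cref{lem:posperturb}, which yields
\begin{equation*}
    \sum_{n=1}^N \left(\lambda_n^\epsilon(D) - \lambda_n^\epsilon(D^{(\beta)})\right)^2 \leq \|B\|_F^2 = O(\beta^2),
\end{equation*}
and hence $|\lambda_n^\epsilon(D) - \lambda_n^\epsilon(D^{(\beta)})| = O(\beta)$ for each $n$. Next, \Cref{diluteness} applied to both $D$ and $D^{(\beta)}$ gives $|\lambda_n(D) - \lambda_n^\epsilon(D)| = O(\epsilon^2)$ and $|\lambda_n(D^{(\beta)}) - \lambda_n^\epsilon(D^{(\beta)})| = O(\epsilon^2)$, so that the triangle inequality produces $|\lambda_n(D) - \lambda_n(D^{(\beta)})| = O(\beta + \epsilon^2)$.

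Finally I would feed this into \Cref{thm:res}. Using the elementary inequality $|\sqrt{a}-\sqrt{b}| \leq \sqrt{|a-b|}$ for non-negative $a,b$ and absorbing the subleading imaginary contribution into an $O(\delta)$ remainder exactly as in the proof of \Cref{thm:size}, I obtain
\begin{equation*}
    |\omega_n(D) - \omega_n(D^{(\beta)})| \leq \sqrt{\delta v^2}\,\sqrt{|\lambda_n(D) - \lambda_n(D^{(\beta)})|} + O(\delta) = O\!\left(\sqrt{\delta(\beta+\epsilon^2)}\right),
\end{equation*}
which is the claimed bound.

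I do not expect any serious obstacle, since all the analytical content — in particular, the expansion of the off-diagonal entries of $\C^\epsilon$ in the translation vectors $\beta_i$ and the symmetry of the resulting perturbation matrix — is already contained in \Cref{lem:posperturb}. The only point worth a moment's care is that the eigenvalues must be consistently ordered across $D$ and $D^{(\beta)}$ when invoking Wielandt--Hoffman, and that the $O(\delta)$ and $O(\epsilon^2)$ remainders from \Cref{thm:res} and \Cref{diluteness} are uniform in $n$ so that they can be absorbed cleanly into the final $O(\sqrt{\delta(\beta+\epsilon^2)})$ estimate.
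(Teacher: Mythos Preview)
Your proposal is correct and follows essentially the same approach as the paper: the paper's proof simply says to invoke \Cref{lem:posperturb} and then ``proceed as in \Cref{thm:size}'' via the Wielandt--Hoffman theorem, which is exactly what you have written out in detail. Your explicit breakdown of the triangle-inequality step through $\lambda_n^\epsilon$ and the use of \Cref{diluteness} matches the argument of \Cref{thm:size} that the paper is implicitly invoking.
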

\begin{proof}
From \Cref{lem:posperturb} we have that $\C^\epsilon(D^{(\beta)})=\C^\epsilon(D)+B(\beta)$ where $B$ is a symmetric $N\times N$-matrix so we can proceed as in \Cref{thm:size} to use the Wielandt-Hoffman theorem to bound $|\lambda_n^\epsilon(D)-\lambda_n^\epsilon(D^{(\beta)})|$ by $\|B\|_F$ for each $n=1,\dots,N$.  Then, approximating under the assumption that $\delta$ and $\epsilon$ are small gives the result.
\end{proof}

\subsection{Higher-order results}

Recall the formula $\omega_n=\sqrt{\delta v^2 \lambda_n}-\iu\delta\tau_n+\dots$ from \Cref{thm:res}. The formula for $\tau_n$ involves the eigenvectors $\vn$ of the generalized capacitance matrix. Assuming the material parameters are real, the $O(\delta)$ term describes the imaginary part of the resonant frequency, so it is important to understand how it is affected by imperfections in the structure.

\begin{figure}
    \begin{center}
        \includegraphics[width=0.5\linewidth]{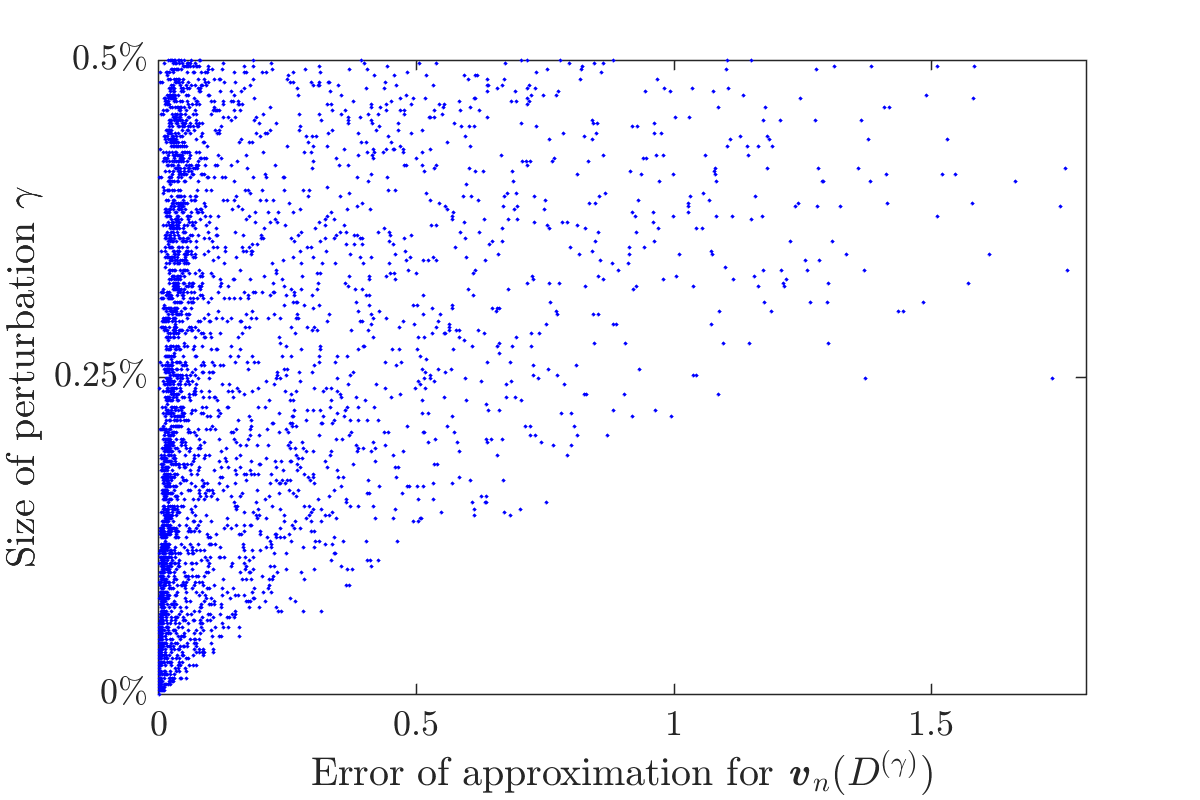}
    \end{center}
    \caption{The error of the approximation for $\vn(D^{(\gamma)})$ derived in \Cref{lem:vec_approx} is small for small perturbations $\gamma$. We repeatedly simulate randomly perturbed cochlea-inspired rainbow sensors and compare the exact value with the approximate value from \Cref{lem:vec_approx}.} \label{fig:eigenvec_perturb}
\end{figure}

\begin{lemma} \label{lem:vec_approx}
    Consider a resonator array $D$ that is such that the associated (symmetric) generalized capacitance matrix $\C(D)$ has $N$ distinct, simple eigenvalues. Suppose that a perturbation, governed by the parameter $\gamma$, is made to the structure to give $D^{(\gamma)}$ and that there is a symmetric matrix $\Gamma(\gamma)$ which is such that
    \begin{equation*}
        \C(D^{(\gamma)})=\C(D)+\Gamma(\gamma),
    \end{equation*}
    and $\|\Gamma(\gamma)\|\to0$ as $\gamma\to0$. Then, the perturbed eigenvectors can be approximated as
    \begin{equation*}
    \vn(D^{(\gamma)})\approx \vn(D) + 
    \sum_{\substack{k=1\\k\neq n}}^N \frac{\langle \Gamma(\gamma)\vn(D), \vk(D)\rangle}{(\lambda_n-\lambda_k)} \vk(D),
\end{equation*}
provided that $\gamma$ is sufficiently small.
\end{lemma}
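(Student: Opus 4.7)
The plan is to apply standard Rayleigh--Schrödinger first-order perturbation theory in the symmetric setting. Since $\mathcal{C}(D)$ is symmetric with $N$ simple eigenvalues, its normalised eigenvectors $\{\vk(D)\}_{k=1}^N$ form an orthonormal basis of $\mathbb{R}^N$. Because $\|\Gamma(\gamma)\|\to 0$ as $\gamma\to 0$, and the eigenvalues of $\mathcal{C}(D)$ are well separated, a continuity/implicit-function argument (or the Bauer--Fike theorem for symmetric matrices) guarantees that for sufficiently small $\gamma$ the perturbed matrix $\mathcal{C}(D^{(\gamma)})$ also has $N$ simple eigenvalues, and the map $\gamma\mapsto (\lambda_n(D^{(\gamma)}), \vn(D^{(\gamma)}))$ is smooth near $\gamma=0$.

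The next step is to expand the perturbed eigenvector in the unperturbed basis,
\begin{equation*}
    \vn(D^{(\gamma)}) = \sum_{k=1}^N c_k\, \vk(D),
\end{equation*}
with coefficients $c_k=c_k(\gamma)$ satisfying $c_n\to 1$ and $c_k\to 0$ for $k\neq n$ as $\gamma\to 0$. Substituting into the eigenvalue equation
\begin{equation*}
    \bigl(\mathcal{C}(D)+\Gamma(\gamma)\bigr)\vn(D^{(\gamma)}) = \lambda_n(D^{(\gamma)}) \vn(D^{(\gamma)})
\end{equation*}
and taking the inner product with $\vj(D)$ for $j\neq n$, using $\mathcal{C}(D)\vk(D)=\lambda_k(D)\vk(D)$ and orthonormality, yields
\begin{equation*}
    \lambda_j(D)\, c_j + \sum_{k=1}^N c_k\, \langle \Gamma(\gamma)\vk(D), \vj(D)\rangle = \lambda_n(D^{(\gamma)})\, c_j.
\end{equation*}

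Solving for $c_j$ gives
\begin{equation*}
    c_j = \frac{\sum_{k=1}^N c_k\, \langle \Gamma(\gamma)\vk(D), \vj(D)\rangle}{\lambda_n(D^{(\gamma)}) - \lambda_j(D)}, \qquad j\neq n.
\end{equation*}
To leading order, $c_n = 1 + O(\gamma)$, $c_k = O(\gamma)$ for $k\neq n$, and $\lambda_n(D^{(\gamma)}) = \lambda_n(D) + O(\gamma)$ (the latter following, e.g., from the Wielandt--Hoffman bound used in \Cref{thm:size}). Substituting these into the formula above, the dominant contribution in the numerator comes from the $k=n$ term, yielding
\begin{equation*}
    c_j = \frac{\langle \Gamma(\gamma)\vn(D), \vj(D)\rangle}{\lambda_n(D) - \lambda_j(D)} + O(\|\Gamma(\gamma)\|^2), \qquad j\neq n,
\end{equation*}
which, combined with $c_n\vn(D) = \vn(D) + O(\gamma)$, gives exactly the claimed approximation.

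The main obstacle is verifying that the expansion is well defined for small $\gamma$, i.e.\ that the simple-eigenvalue structure of $\mathcal{C}(D)$ persists under the perturbation and that the branch of eigenvectors $\vn(D^{(\gamma)})$ depends continuously (indeed, analytically) on $\gamma$. This is where the hypothesis of distinct simple eigenvalues is crucial, because it ensures the denominators $\lambda_n(D)-\lambda_k(D)$ are bounded away from zero and legitimises the perturbative expansion; without it, one would have to track eigenvector bases within invariant subspaces and the formula would break down. Everything else is a routine expansion in the small parameter.
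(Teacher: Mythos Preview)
Your argument is correct and is the standard Rayleigh--Schr\"odinger derivation: expand the perturbed eigenvector in the unperturbed orthonormal basis, substitute into the eigenvalue equation, take inner products with $\vj(D)$, and keep the leading contribution in $\|\Gamma(\gamma)\|$. The paper takes a genuinely different route. It works with the resolvent, writing $(\lambda I-\C(D))^{-1}$ via the spectral decomposition, expanding $(\lambda I-\C(D^{(\gamma)}))^{-1}$ as a Neumann series in $\Gamma$, and then recovering the perturbed eigenvector by applying the Riesz spectral projector
\[
\mathcal{P}_n=\frac{1}{2\pi\iu}\oint_{\eta_n}(\lambda I-\C(D^{(\gamma)}))^{-1}\de\lambda
\]
(with $\eta_n$ a small contour enclosing only $\lambda_n$) to the unperturbed $\vn(D)$. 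Your approach is more elementary and entirely adequate for a first-order statement; the resolvent/projector route is more systematic if one wants higher-order terms or to treat eigenvalue clusters, and it makes the smallness requirement explicit through the condition $\|\Gamma(\gamma)\|<\text{dist}(\lambda,\sigma(\C(D)))$ needed for the Neumann series to converge. Both proofs leave the same technicality informal---uniform control of the remainder near the spectrum---and the paper says so explicitly. One minor point: your intermediate claims should be phrased as $O(\|\Gamma(\gamma)\|)$ rather than $O(\gamma)$, since the hypothesis only gives $\|\Gamma(\gamma)\|\to0$ and says nothing about the rate in $\gamma$; you already do this in the final displayed estimate.
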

\begin{proof}
Since $\C(D)$ is a symmetric matrix, it has an orthonormal basis of eigenvectors $\{\vn:n=1,\dots,N\}$ with associated eigenvalues $\sigma(\C(D))=\{\lambda_n: n=1,\dots,N\}$, which are assumed to be distinct. Under this assumption, we have the decomposition
\begin{equation} \label{eq:decomp}
    (\lambda I-\C(D))^{-1}x=\sum_{k=1}^N \frac{\langle x, \vk\rangle}{\lambda-\lambda_k} \vk, \quad x\in\mathbb{C}^n, \, \lambda\in\mathbb{C}\setminus\sigma(\C).
\end{equation}
From this we can see that $\|(\lambda I-\C(D))^{-1}\|\leq \text{dist}(\lambda,\sigma(\C(D)))^{-1}$. If we add a perturbation matrix $\Gamma(\gamma)$ which is such that $\|\Gamma(\gamma)\|<\text{dist}(\lambda,\sigma(\C(D)))$, then $\lambda I-\C(D^{(\gamma)})=\lambda I-\C(D)-\Gamma(\gamma)$ is invertible. Further, in this case, we can use a Neumann series to see that
\begin{equation} \label{eq:Neumann}
    (\lambda I-\C(D^{(\gamma)}))^{-1}=(\lambda I-\C(D)-\Gamma)^{-1}=(\lambda I-\C(D))^{-1} \sum_{i=0}^\infty \Gamma^i \left( (\lambda I-\C(D))^{-1} \right)^i.
\end{equation}
Substituting the decomposition \eqref{eq:decomp} and taking only the first two terms from \eqref{eq:Neumann}, we see that for a fixed $\lambda\in\mathbb{C}\setminus\sigma(\C)$ we have
\begin{equation} \label{eq:modNeumann}
    (\lambda I-\C(D^{(\gamma)}))^{-1}
    =\sum_{k=1}^N \frac{\langle \,\cdot\, , \vk\rangle}{\lambda-\lambda_k} \vk
    + \sum_{k=1}^N\sum_{j=1}^N \frac{\langle \,\cdot\, , \vj\rangle \langle \Gamma\vj, \vk\rangle}{(\lambda-\lambda_k)(\lambda-\lambda_j)} \vk+\dots,
\end{equation}
where the remainder terms are $O(\|\Gamma(\gamma)\|^2)$ as $\gamma\to0$.

Suppose we have a collection of closed curves $\{\eta_n:n=1,\dots,N\}$ which do not intersect and are such that the interior of each curve $\eta_n$ contains exactly one eigenvalue $\lambda_n$. We know that we may choose $\gamma$ to be sufficiently small that the eigenvalues of $\C(D^{(\gamma)})$ remain within the interior of these same curves. Thus, the operator $P_n:\mathbb{C}^N\to\mathbb{C}^N$, defined by
\begin{equation}
    \mathcal{P}_n=\frac{1}{2\pi\iu} \int_{\eta_n} (\lambda I-\C(D^{(\gamma)}))^{-1} \de\lambda,
\end{equation}
is the projection onto the eigenspace associated to the perturbed eigenvalue $\lambda_n(D^{(\gamma)})$. Using the expansion \eqref{eq:modNeumann}, we can calculate an approximation to the operator $\mathcal{P}_n$, given by 
\begin{equation*}
    \mathcal{P}_n\approx\langle \,\cdot\, , \vn\rangle \vn + 
    \sum_{\substack{k=1\\k\neq n}}^N \frac{\langle \,\cdot\, , \vn\rangle \langle \Gamma\vn, \vk\rangle}{(\lambda_n-\lambda_k)} \vk,
\end{equation*}
where we are assume the remainder term to be small (this is a technical issue, due to the non-uniformity of the expansion \eqref{eq:modNeumann} near to $\lambda\in\sigma(\C(D))$). Applying this approximation for the operator $\mathcal{P}_n$ to the unperturbed eigenvector $\vn$ gives the result.
\end{proof}

\Cref{lem:vec_approx} gives an approximate value for the eigenvectors of the generalized capacitance matrix when small perturbations have been made to an array of subwavelength resonators. It does not include estimates for the error, however we the accuracy of the formula has been verified by simulation. In \Cref{fig:eigenvec_perturb}, we show the norm of the difference between the formula from \Cref{lem:vec_approx} and the true eigenvector for many randomly perturbed cochlea-inspired rainbow sensors. We see that the errors are small when the size of the perturbations $\gamma$ is small.

\section{Removing resonators from the device} \label{sec:remove}

We will now consider a different class of perturbations of the rainbow sensors: the effect of removing a resonator from the array. This is shown in \Cref{fig:removal_sketch}. This is inspired by observations of the biological cochlea where in many places the receptor cells are so badly damaged that the stereocilia have been completely destroyed, as depicted in \Cref{fig:cochlea_damage}. 

\begin{figure}
    \begin{center}
        \includegraphics[width=0.6\linewidth]{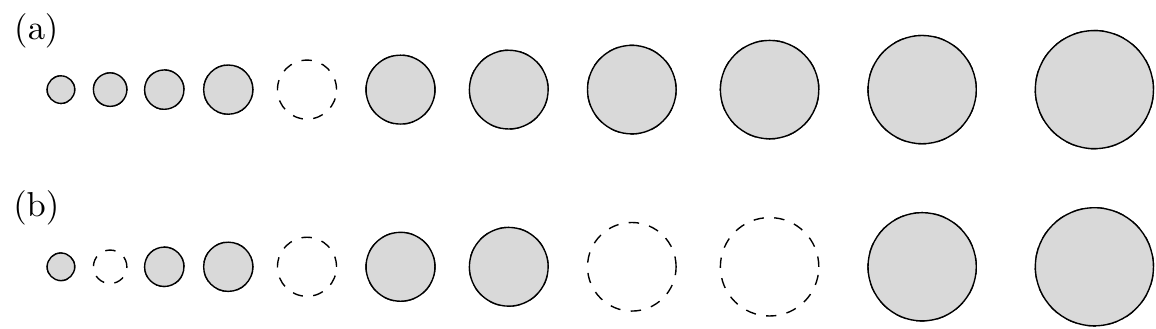}
    \end{center}
    \caption{We study the effects of removing resonators from a cochlea-inspired rainbow sensor. (a) The rainbow sensor with a single resonator removed, denoted $D^{(5)}$. (b) The rainbow sensor with multiple resonators removed, denoted $D^{(2,5,8,9)}$.  The original rainbow sensor, $D=D_1\cup\dots\cup D_{11}$, is shown in dashed.} \label{fig:removal_sketch}
\end{figure}

We introduce some notation to describe a system of resonators with one or more resonators removed. Given a resonator array $D$ we write $D^{(i)}$ to denote the same array with the $i$\textsuperscript{th} resonator removed. The resonators are labelled according to increasing volume (so, from left to right in the graded cochlea-inspired rainbow sensors depicted here, as in \Cref{fig:rainbow}). For the removal of multiple resonators we add additional subscripts. For example, in \Cref{fig:removal_sketch}(a) we show $D^{(5)}=D_1\cup\dots\cup D_4\cup D_6\cup\dots\cup D_{11}$ and in \Cref{fig:removal_sketch}(b) we show $D^{(2,5,8,9)}$, which has the 2\textsuperscript{nd}, 5\textsuperscript{th}, 8\textsuperscript{th} and 9\textsuperscript{th} resonators removed. 

The crucial result that underpins the analysis in this section is Cauchy's Interlacing Theorem, which describes the relation between a Hermitian matrix's eigenvalues and the eigenvalues of its principal submatrices. A principle submatrix is a matrix obtained by removing rows and columns (with the same indices) from a matrix.

\begin{thm}[Cauchy's Interlacing Theorem] \label{thm:interlace}
    Let $A$ be an $N \times N$ Hermitian matrix with eigenvalues $\lambda_1\leq\lambda_2\leq\dots\leq\lambda_N$. Suppose that $B$ is an $(N - 1) \times (N - 1)$ principal submatrix of $A$ with eigenvalues $\mu_1 \leq \mu_2 \leq\dots\leq \mu_{N-1}$. Then, the eigenvalues are ordered such that $\lambda_1\leq\mu_1\leq\lambda_2\leq\mu_2\leq\dots\leq\lambda_{N-1}\leq\mu_{N-1}\leq\lambda_N$.
\end{thm}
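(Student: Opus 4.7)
The plan is to prove this by appealing to the Courant--Fischer min-max characterization of eigenvalues of Hermitian matrices. First, I would reduce to the case where $B$ is the top-left $(N-1)\times(N-1)$ principal submatrix of $A$. Any principal submatrix can be brought to this form by a permutation similarity $A \mapsto P^* A P$, which preserves both the Hermitian property and the spectrum. In this reduction, $B$ is naturally identified with the restriction of the quadratic form $x \mapsto x^* A x$ to the $(N-1)$-dimensional subspace $W = \{x \in \mathbb{C}^N : x_N = 0\}$, so the eigenvalues of $B$ coincide with those of this restriction.

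Next, I would invoke the two dual forms of the Courant--Fischer theorem,
\begin{equation*}
\lambda_k(A) = \min_{\substack{S \subset \mathbb{C}^N \\ \dim S = k}} \,\max_{\substack{x \in S \\ x \neq 0}} \frac{x^* A x}{x^* x} = \max_{\substack{S \subset \mathbb{C}^N \\ \dim S = N-k+1}} \,\min_{\substack{x \in S \\ x \neq 0}} \frac{x^* A x}{x^* x},
\end{equation*}
together with the analogous expressions for $\mu_k(B)$ in which the subspaces $S$ are additionally constrained to lie inside $W$.

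The lower bound $\lambda_k \leq \mu_k$ then follows immediately from the first (min-max) formula: the class of $k$-dimensional subspaces of $W$ is contained in the class of all $k$-dimensional subspaces of $\mathbb{C}^N$, so restricting the outer minimisation can only enlarge the value. For the upper bound $\mu_k \leq \lambda_{k+1}$, I would apply the dual max-min formula with index $k+1$: this expresses $\lambda_{k+1}$ as a maximum over $(N-k)$-dimensional subspaces of $\mathbb{C}^N$, while the corresponding formula for $\mu_k$, noting that $(N-1)-k+1 = N-k$, is the same maximum restricted to subspaces contained in $W$. Restricting the outer maximisation can only shrink the value, giving $\mu_k \leq \lambda_{k+1}$. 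Chaining these inequalities for $k=1,\dots,N-1$ yields the required interlacing $\lambda_1 \leq \mu_1 \leq \lambda_2 \leq \mu_2 \leq \dots \leq \lambda_N$.

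The main obstacle, if one insists on a fully self-contained proof, is the Courant--Fischer theorem itself, but this is a standard result on Hermitian matrices and may be invoked directly (see, e.g., \cite{golub2983matrix}). Beyond that, the only point that deserves care is verifying that the spectrum of $B$ coincides with that of the restricted form $x \mapsto x^* A x\big|_W$, which is immediate once $B$ is identified with the matrix of this restriction in the canonical orthonormal basis of $W$.
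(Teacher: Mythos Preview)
Your proof via the Courant--Fischer min--max characterization is correct and complete; the reduction to the top-left principal submatrix by permutation similarity is valid, the identification of $\mu_k(B)$ with the restricted Rayleigh quotient on $W$ is immediate, and both inequalities $\lambda_k \leq \mu_k$ and $\mu_k \leq \lambda_{k+1}$ follow cleanly from the monotonicity of the outer optimisation under restriction of the admissible subspaces.

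By way of comparison, the paper does not actually give a proof of \Cref{thm:interlace}: it simply states that ``various proof strategies exist'' and cites \cite{golub2983matrix} and \cite{hwang2004cauchy}. Your argument is precisely one of the standard strategies alluded to (and indeed is the one presented in \cite{golub2983matrix}), so in effect you have supplied what the paper chose to outsource. There is nothing to correct.
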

\begin{proof}
Various proof strategies exist, see \cite{golub2983matrix} or \cite{hwang2004cauchy}, for example.
\end{proof}

Thanks to Cauchy's Interlacing Theorem, we can quickly obtain a result for the eigenvalues of the generalized capacitance matrix. In order to state a result for the resonant frequencies of a resonator array, we will first introduce some asymptotic notation.

\begin{defn}
For real-valued functions $f$ and $g$, we will write that $f(\delta)\gtrsim g(\delta)$ as $\delta\to0$ if 
\begin{equation*}
    \lim_{\delta\to0} \frac{f(\delta)}{\max\{f(\delta),g(\delta)\}}=1, \qquad\text{as }\delta\to0, 
\end{equation*}
where we define the ratio to be 1 in the event that $0=f\geq g$.
\end{defn}

\begin{lemma} \label{interlacing}
    Let $D$ be a resonator array and $D^{(i)}$ be the same array with the $i$\textsuperscript{th} resonator removed. Then, if $\delta$ is sufficiently small, the resonant frequencies of the two structures interlace in the sense that
    \begin{equation*}
        \Re(\omega_j(D))\lesssim\Re(\omega_j(D^{(i)}))\lesssim\Re(\omega_{j+1}(D)) \quad\text{for all } j=1,\dots,N-1.
    \end{equation*}
\end{lemma}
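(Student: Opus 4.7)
The plan is to apply Cauchy's Interlacing Theorem at the level of the generalized capacitance matrix, and then translate the eigenvalue interlacing to resonant frequencies through the asymptotic formula of \Cref{thm:res}. The critical structural observation, which relies on the dilute setup implicit throughout the paper, is that removing one resonator corresponds exactly to deleting one row and one column from the capacitance matrix.

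Concretely, the first step is to observe from \Cref{defn:diluteGCM} that each entry $\C^\epsilon_{jk}$ depends only on $B_j$, $B_k$, $z_j$, $z_k$ and $\epsilon$; hence $\C^\epsilon(D^{(i)})$ is precisely the $(N-1)\times(N-1)$ principal submatrix of $\C^\epsilon(D)$ obtained by removing row and column $i$. Since $\C^\epsilon$ is symmetric, \Cref{thm:interlace} yields
\begin{equation*}
    \lambda_j^\epsilon(D)\le\lambda_j^\epsilon(D^{(i)})\le\lambda_{j+1}^\epsilon(D), \qquad j=1,\dots,N-1.
\end{equation*}
The second step is to upgrade this to the true (non-dilute) eigenvalues $\lambda_j(D)$ and $\lambda_j(D^{(i)})$ via \Cref{diluteness}, at the cost of $O(\epsilon^2)$ perturbations. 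The final step is to feed these interlacings through the expansion $\omega_j(D)=\sqrt{\delta v^2\lambda_j(D)}-\iu\delta\tau_j(D)+O(\delta^{3/2})$ from \Cref{thm:res}: since $\tau_j$ is real, the real part reduces to $\Re(\omega_j)=\sqrt{\delta v^2\lambda_j}+O(\delta^{3/2})$, and $x\mapsto\sqrt{x}$ is monotone on $[0,\infty)$, so the ordering of eigenvalues is preserved by taking square roots.

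The main obstacle I anticipate is cleanly handling these inequalities in the presence of the $O(\delta^{3/2})$ and $O(\sqrt{\delta}\,\epsilon^2)$ error terms. Because the leading real part is of order $\sqrt{\delta}$, the relative size of the correction vanishes as $\delta\to 0$, which is precisely the asymptotic situation the $\lesssim$ relation (defined via the $\max\{f,g\}$-normalisation) is built to absorb. The only delicate case is when two of the eigenvalues coincide, or differ only at subleading order, so that a strict inequality cannot be maintained; here the $\lesssim$ collapses to an asymptotic equivalence statement, which follows automatically from the $O(\delta^{3/2})$ remainder in \Cref{thm:res} together with the continuity of the eigenvalues in the perturbation parameter $\epsilon$.
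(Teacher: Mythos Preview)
The paper's argument is shorter and does not pass through the dilute approximation at all. It applies \Cref{thm:interlace} directly to the symmetric matrix $\C(D)$, treating $\C(D^{(i)})$ as the $(N{-}1)\times(N{-}1)$ principal submatrix obtained by deleting row and column $i$, and thereby obtains the \emph{exact} eigenvalue interlacing $\lambda_j(D)\le\lambda_j(D^{(i)})\le\lambda_{j+1}(D)$ for $j=1,\dots,N-1$. It then feeds this into \Cref{thm:res}; since the square root is monotone and the $O(\delta^{3/2})$ remainder is of strictly lower order in $\delta$ than the leading $\sqrt{\delta}$ term, the $\lesssim$ inequalities follow immediately. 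No $\epsilon$ appears anywhere.

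Your detour through $\C^\epsilon$ is not merely longer; it creates a genuine gap. The $O(\epsilon^2)$ corrections coming from \Cref{diluteness} are fixed geometric quantities that do \emph{not} tend to zero as $\delta\to 0$. After your ``upgrade'' step you only know $\lambda_j(D)\le\lambda_j(D^{(i)})+C\epsilon^2$, which is perfectly consistent with a strict reversal $\lambda_j(D)>\lambda_j(D^{(i)})$. In that case
\[
\frac{\Re(\omega_j(D^{(i)}))}{\max\{\Re(\omega_j(D)),\Re(\omega_j(D^{(i)}))\}}\ \longrightarrow\ \sqrt{\frac{\lambda_j(D^{(i)})}{\lambda_j(D)}}<1
\qquad\text{as }\delta\to 0,
\]
so the limit in the definition of $\lesssim$ is not $1$ and the conclusion fails. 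The $\lesssim$ relation is designed to absorb remainders that are $o(\sqrt{\delta})$; it cannot absorb an $O(\epsilon^2)$ discrepancy sitting inside the leading-order coefficient. Your closing appeal to continuity in $\epsilon$ does not rescue this either, since the lemma is stated at fixed geometry, with no diluteness hypothesis and no $\epsilon\to 0$ limit. You are right that the principal-submatrix property is transparent for $\C^\epsilon$ and less obviously so for $\C$ itself (the entries of $\C$ involve $(\S_D^0)^{-1}$, which depends on all of $\partial D$), but the paper simply takes this identification for granted and works at the level of $\C(D)$ throughout.
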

\begin{proof}
Since $\C(D)$ is symmetric and real valued, we can use Cauchy's Interlacing Theorem (\Cref{thm:interlace}) to see that 
\begin{equation*}
        \lambda_j(D)\leq\lambda_j(D^{(i)})\leq\lambda_{j+1}(D) \quad\text{for all } j=1,\dots,N-1.
    \end{equation*}
Then, the result follows from the asymptotic formula in \Cref{thm:res}.
\end{proof}

\begin{figure}
    \centering
    \includegraphics[width=\linewidth,trim=0 0.5cm 0 0.8cm,clip]{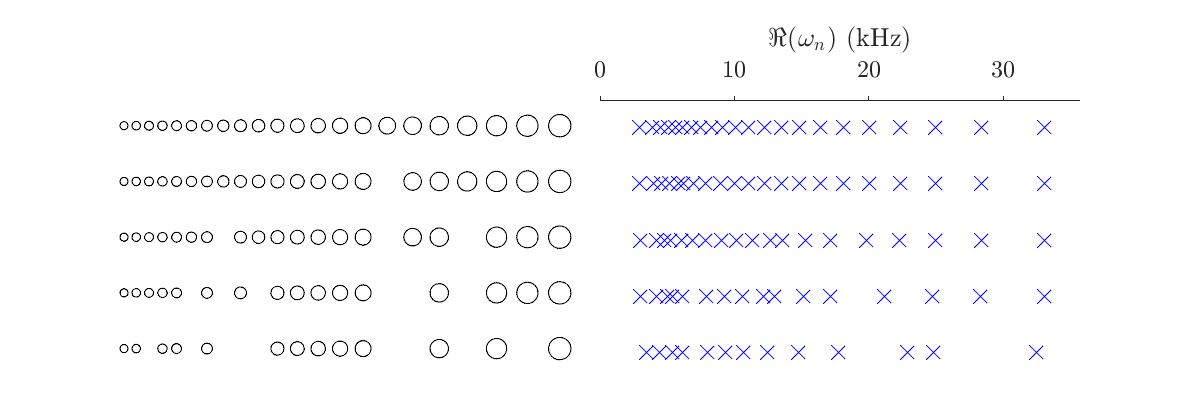}
    \caption{The subwavelength resonant frequencies of a cochlea-inspired rainbow sensor with resonators removed. Each subsequent array has additional resonators removed and its set of resonant frequencies interlaces the previous, at leading order, as predicted by \Cref{interlacing}.}
    \label{fig:removal}
\end{figure}

The subwavelength resonant frequencies of resonator arrays with an increasing number of removed resonators are shown in \Cref{fig:removal}. We see that the frequencies interlace those of the previous structure and remain distributed across the audible range.

\subsection{Stable removal from large devices} \label{sec:large}

In general, \Cref{interlacing} is useful for understanding the effect of removing a resonator but does not give stability, in the sense of the perturbation being small. However, a cochlea-inspired rainbow sensor with a large number of resonators can be designed such that the resonant frequencies are bounded, even as their number becomes very large. In this case, many of the gaps between the real parts will be small and, subsequently, so will the perturbations caused by removing a resonator. There are a variety of ways to formulate this precisely, one version is given in the following theorem.

\begin{thm} \label{thm:largearray}
Suppose that a resonator array $D$ is dilute with parameter $0<\epsilon\ll1$ in the sense that
\begin{equation*}
    D=\bigcup\limits_{j=1}^{N}(B+\epsilon^{-1}z_{j}),
\end{equation*}
where $B$ is a fixed bounded domain and $\epsilon^{-1}z_{j}$ represents the position of each resonator. In this case, the leading-order approximation of the generalized capacitance matrix is given by $\epsilon^2\C^\epsilon$ as $\epsilon\to0$ (where $\C^\epsilon$ was defined in \Cref{defn:diluteGCM}). Further, there exists a constant $c\in\mathbb{R}$, which does not depend on $N$ or $\epsilon$, such that if $\epsilon=\frac{c}{N}$, then all the eigenvalues $\{\lambda_{j}\}$ of $\epsilon^2\C^\epsilon$ are such that
\begin{align}
    0<\lambda_{j}<\frac{2|\mathrm{Cap}_{B}|}{|B|}.
\end{align}
\end{thm}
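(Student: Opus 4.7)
The plan is to apply Gershgorin's circle theorem to the explicit form that $\C^\epsilon$ takes when all resonators are congruent, and then to exploit the rescaling $\epsilon=c/N$ to kill the $N$-dependence of the Gershgorin radii. Since \Cref{diluteness} already tells us that $\C^\epsilon$ captures the generalized capacitance matrix up to an additive correction of order $\epsilon^2$, bounding the eigenvalues of $\C^\epsilon$ away from $0$ and $2\mathrm{Cap}_B/|B|$ uniformly in $N$ is the essential step.

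First I would specialize \Cref{defn:diluteGCM} to the case $B_i=B$ for every $i$: the volume scaling matrix becomes $V=|B|^{-1/2}I$, and one finds
\[
\C^\epsilon_{ii}=\frac{\mathrm{Cap}_B}{|B|}, \qquad \C^\epsilon_{ij}=-\frac{\epsilon\,\mathrm{Cap}_B^{\,2}}{4\pi|B|\,|z_i-z_j|}\quad (i\neq j),
\]
which is a real symmetric matrix with constant diagonal and strictly negative off-diagonals. Next I apply Gershgorin's circle theorem: every eigenvalue of $\C^\epsilon$ lies in some disc centered at $\mathrm{Cap}_B/|B|$ with radius
\[
R_i=\frac{\epsilon\,\mathrm{Cap}_B^{\,2}}{4\pi|B|}\sum_{j\neq i}\frac{1}{|z_i-z_j|}.
\]
Writing $d_{\min}:=\min_{i\neq j}|z_i-z_j|$, which is positive as long as the unscaled sites $\{z_j\}$ are distinct, the crude bound $\sum_{j\neq i}|z_i-z_j|^{-1}\leq (N-1)/d_{\min}$ gives $R_i\leq \epsilon(N-1)\mathrm{Cap}_B^{\,2}/(4\pi|B|d_{\min})$. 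Plugging in $\epsilon=c/N$ removes the dependence on $N$ altogether, yielding
\[
R_i\leq \frac{c\,\mathrm{Cap}_B^{\,2}}{4\pi|B|\,d_{\min}}.
\]
Finally I would choose $c<4\pi d_{\min}/\mathrm{Cap}_B$, which forces $R_i<\mathrm{Cap}_B/|B|$ for every $i$, so that every Gershgorin disc is contained in the interval $(0,\,2\mathrm{Cap}_B/|B|)$ and hence so is every eigenvalue.

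The main obstacle is really bookkeeping rather than anything analytic: one must reconcile the $\epsilon^2$ prefactor in the theorem statement with the form of $\C^\epsilon$ from \Cref{defn:diluteGCM} (which already absorbs the diluteness scaling), and explicitly invoke the mild geometric hypothesis that the unscaled sites $\{z_j\}$ stay at least $d_{\min}$ apart so that the constant $c$ can be chosen independently of $N$. If one wanted sharper quantitative information — for instance the fact that $\sum_{j\neq i}|z_i-z_j|^{-1}$ only grows like $\log N$ for a one-dimensional graded array — the Gershgorin estimate could be refined accordingly, but the loose counting bound $(N-1)/d_{\min}$ is already sufficient to meet the threshold $2\mathrm{Cap}_B/|B|$ stated in the theorem.
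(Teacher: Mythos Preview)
Your proposal is correct and is essentially identical to the paper's own proof: both write out the explicit entries of the dilute generalized capacitance matrix for congruent resonators, apply Gershgorin's circle theorem, bound $\sum_{j\neq i}|z_i-z_j|^{-1}$ by $(N-1)\sup_{i\neq j}|z_i-z_j|^{-1}$, and then choose $c=\epsilon N$ so that each Gershgorin radius is strictly less than $\mathrm{Cap}_B/|B|$. Your remark about the $\epsilon^2$ bookkeeping is apt --- the paper's displayed formula for ``$\epsilon^2\C^\epsilon_{ij}$'' in fact coincides with the entries of $\C^\epsilon$ from \Cref{defn:diluteGCM}, so the inconsistency you flag is present in the original as well.
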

\begin{proof}
In this case, it is easy to check that the leading-order approximation of the generalized capacitance matrix is given by
\begin{align}
\epsilon^2\C^\epsilon_{ij}= \begin{cases}
\frac{\mathrm{Cap}_{B}}{|B|}, & i=j, \\
-\frac{ \epsilon \mathrm{Cap}_{B}^{2}}{4\pi|B| |z_{i}-z_{j}|}, & i\neq j,\\
\end{cases}
\end{align}
as $\epsilon\to0$. By the Gershgorin circle theorem we know that the eigenvalues $\{\lambda_{j}: j=1,\dots,N\}$ must be such that
\begin{align}
    \left|\lambda_{j}-\frac{\mathrm{Cap}_{B}}{|B|} \right|\leq \frac{ \epsilon\mathrm{Cap}_{B}^{2}}{4\pi|B|} \sum_{i\neq j} \frac{1}{|z_{i}-z_{j}|}, \quad j=1,\dots,N.
\end{align}
Now, we have that 
\begin{equation*}
    \frac{ \epsilon\mathrm{Cap}_{B}}{4\pi} \sum_{i\neq j} \frac{1}{|z_{i}-z_{j}|}
    \leq \epsilon(N-1)\frac{\mathrm{Cap}_{B}}{4\pi} \sup_{i\neq j} |z_{i}-z_{j}|^{-1},
\end{equation*}
which we can choose to be less than 1 by selecting $c=\epsilon N$ appropriately. In which case, we have that the eigenvalues $\{\lambda_{j}: j=1,\dots,N\}$ satisfy 
\begin{equation*}
    \left|\lambda_{j}-\frac{\mathrm{Cap}_{B}}{|B|} \right|\leq \frac{\mathrm{Cap}_{B}}{|B|}, \quad j=1,\dots,N. \qedhere
\end{equation*}
\end{proof}

It is important to note that \Cref{thm:largearray} merely shows that the real parts of the resonant frequencies will be bounded, as the number of resonators becomes large. It does not guarantee that they are evenly spaced or that the gaps between any particular adjacent resonant frequencies are small. For example, see \Cref{fig:largearray}, where the subwavelength resonant frequencies for increasingly large arrays, dimensioned according to \Cref{thm:largearray}, are shown. We see that the frequencies become very dense in part of the range but remain sparser at higher frequencies.

\begin{figure}
    \centering
    \includegraphics[width=0.6\linewidth,trim=0 1.7cm 0 0, clip]{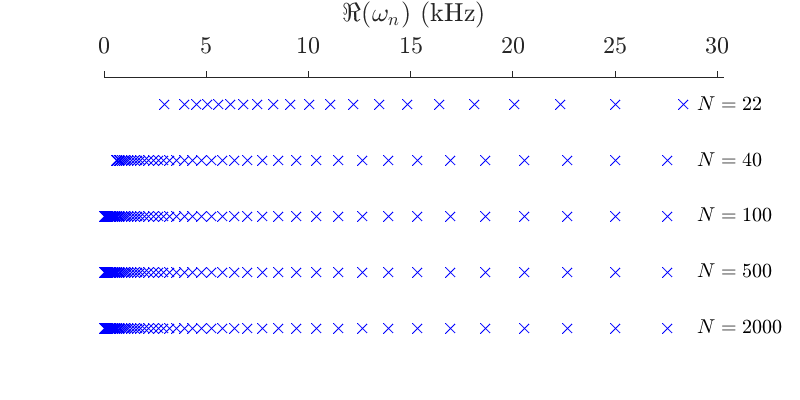}
    \caption{Large cochlea-inspired rainbow sensors can be designed such that the subwavelength resonant frequencies are bounded. Here, we simulate successively larger arrays, according to the dilute regime defined in \Cref{thm:largearray}.}
    \label{fig:largearray}
\end{figure}

\section{Implications for signal processing} \label{sec:signals}

The aim of the cochlea-like rainbow sensor studied in this work is to replicate the ability of the cochlea to filter sounds. There is also a large community of researchers developing signal processing algorithms with the same aim: to replicate the abilities of the human auditory system. Since we have precise analytic methods to describe how the array scatters an incoming field, we can draw comparisons between the cochlea-inspired rainbow sensor studied here and biomimetic signal transforms. This is explored in detail in \cite{ammari2020biomimetic}. In particular, given a formula for the field that is scattered by the cochlea-inspired rainbow sensor, we can deduce the corresponding signal transform. In this section, we explore how this signal transform is affected by the introduction of errors and imperfections. 

\subsection{A biomimetic signal transform} \label{sec:algorithm}

We briefly recall from \cite{ammari2020biomimetic} how a biomimetic signal transform can be deduced from a cochlea-inspired rainbow sensor. In response to an incoming wave $\uin$, the solution to the Helmtolz problem \eqref{eq:helmholtz_equation} is given, for $x\in\outside$, as
\begin{equation} \label{eq:frequencydecomp}
u(x)-\uin(x) = \sum_{n=1}^N q_n \S_D^k[\psi_n](x) - \S_D^k\left[\S_D^{-1}[\uin]\right](x) + O(\omega),
\end{equation} 
as $\omega\to0$, for constants $q_n$ which satisfy
\begin{equation}\label{eq:eval_C}
	\left({\omega^2}I-{v_b^2\delta}\,\C\right)\begin{pmatrix}q_1\\ \vdots\\q_N\end{pmatrix}
	=
	{v_b^2\delta}\begin{pmatrix} \frac{1}{|D_1|} \int_{\D_1}\S_D^{-1}[\uin]\de\sigma \\ \vdots\\
	\frac{1}{|D_N|}\int_{\D_N}\S_D^{-1}[\uin]\de\sigma \end{pmatrix}+O(\delta \omega+\omega^3),
\end{equation}
as $\omega,\delta\to0$. Suppose that the incoming wave is a plane wave and can be written in terms of some real-valued function $s$ as
\begin{equation}
    \uin(x,\omega)=\int_{-\infty}^{\infty} s(x_1/v-t)e^{\iu\omega t} \de t.
\end{equation}
Assuming that we are in an appropriate low-frequency regime, such that the remainder terms remain small, we can apply a Fourier transform to \eqref{eq:frequencydecomp} to see that the scattered pressure field $p(x,t)$ is given by
\begin{equation*} 
p(x,t)= \sum_{n=1}^N a_n[s](t) u_n(x) + ...,
\end{equation*}
where the remainder term is $O(\delta)$ and the coefficients are given by
\begin{equation} \label{eq:andefn}
    a_n[s](t)=\left( s*h[\omega_n] \right)(t), \qquad n=1,\dots,N,
\end{equation}
for kernels defined as
\begin{equation} \label{eq:hdefn}
h[\omega_n](t)=
\begin{cases}
0, & t<0, \\
c_n e^{\Im(\omega_n)t} \sin(\Re(\omega_n)t), & t\geq0,
\end{cases} \qquad n=1,\dots,N,
\end{equation}
for some real-valued constants $c_n$. See \cite{ammari2020biomimetic} for details. Thus, the deduced signal transform is: given a signal $s$, compute the $N$ time-varying outputs $a_n[s]$, defined by \eqref{eq:andefn}. 

\subsection{Stability to errors}

We wish to show that the signal transform $s\mapsto a_n[s]:=s*h[\omega_n]$ is robust with respect to errors and imperfections in the design of the underlying cochlea-inspired rainbow sensor.

\begin{thm} \label{thm:stability}
Given two complex numbers $\omold$ and $\omnew$ with negative imaginary parts, it holds that
\begin{align*}
    \left\|s*h[\omold]-s*h[\omnew]\right\|_{L^{\infty}(\mathbb{R})} &\leq \|h[\omold]-h[\omnew]\|_{L^{\infty}(\mathbb{R}))}\|s\|_{L^{1}(\mathbb{R})},
\end{align*}
for all $s \in L^{1}(\mathbb{R})$. 
\end{thm}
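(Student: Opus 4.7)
The statement is essentially a direct application of Young's convolution inequality, so my plan is short. First, I would use bilinearity of convolution with respect to its kernel argument to rewrite the left-hand side as a single convolution:
\begin{equation*}
    s*h[\omold] - s*h[\omnew] = s*\bigl(h[\omold]-h[\omnew]\bigr).
\end{equation*}
This reduces the claim to showing that $\|s*k\|_{L^\infty(\mathbb{R})} \leq \|k\|_{L^\infty(\mathbb{R})} \|s\|_{L^1(\mathbb{R})}$ for $k:=h[\omold]-h[\omnew]$. This is exactly the $L^1 * L^\infty \to L^\infty$ case of Young's inequality: pointwise, $|(s*k)(t)| \leq \int_{\mathbb{R}} |s(t-\tau)|\,|k(\tau)|\,\mathrm{d}\tau \leq \|k\|_{L^\infty(\mathbb{R})} \|s\|_{L^1(\mathbb{R})}$, and taking the supremum over $t$ yields the bound.

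The only thing that needs checking is that the right-hand side is finite, i.e.\ that $h[\omega] \in L^\infty(\mathbb{R})$ whenever $\Im(\omega)<0$. This follows immediately from the definition \eqref{eq:hdefn}: for $t<0$ the kernel vanishes, while for $t\geq 0$ we have
\begin{equation*}
    |h[\omega](t)| = |c_n|\, e^{\Im(\omega)t}\,|\sin(\Re(\omega)t)| \leq |c_n|,
\end{equation*}
since $\Im(\omega)<0$ makes $e^{\Im(\omega)t}\leq 1$ on $[0,\infty)$. Applied to both $\omold$ and $\omnew$, this shows $k \in L^\infty(\mathbb{R})$ and the bound from Young's inequality is meaningful. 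There is no genuine obstacle here; the content of the theorem is really just that the signal transform $s \mapsto s*h[\omega_n]$ from \Cref{sec:algorithm} depends Lipschitz-continuously on the kernel in the $L^\infty$-norm, which combined with the stability results of \Cref{sec:imperfections,sec:remove} for the resonant frequencies $\omega_n$ (hence for the kernels $h[\omega_n]$) gives stability of the full biomimetic signal transform under imperfections.
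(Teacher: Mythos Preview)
Your proof is correct and is essentially the same as the paper's: both reduce to the pointwise estimate $|(s*k)(x)|\leq \int_{\mathbb{R}}|s(x-y)||k(y)|\,\mathrm{d}y\leq \|k\|_{L^\infty}\|s\|_{L^1}$ with $k=h[\omold]-h[\omnew]$, i.e.\ the $L^1*L^\infty\to L^\infty$ case of Young's inequality. Your additional check that $h[\omega]\in L^\infty(\mathbb{R})$ when $\Im(\omega)<0$ is a welcome clarification that the paper leaves implicit.
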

\begin{proof} This is a standard argument for bounding convolutions:
\begin{align*}
    \|s*h[\omold]-s*h[\omnew]\|_{L^{\infty}(\mathbb{R})}
      &\leq \sup_{x\in \mathbb{R}}  \int_{\mathbb{R}}|s(x-y)|h[\omold](y)-h[\omnew](y)| \de y \\ 
    &\leq \|h_{n}^{old}-h_{n}^{new}\|_{L^{\infty}(\mathbb{R})} \sup_{x\in \mathbb{R}}\int_{\mathbb{R}}|s(x-y)| \de y \\
    &=\|h_{n}^{old}-h_{n}^{new}\|_{L^{\infty}(\mathbb{R})}\|s\|_{L^{1}(\mathbb{R})}. \qedhere
\end{align*}
\end{proof}

\begin{remark}
If $s$ is compactly supported, then we can reframe  \Cref{thm:stability} in terms of $\|\cdot\|_{L^p(\mathbb{R})}$ for any $1\leq p\leq\infty$, using H\"older's inequality.
\end{remark}

\begin{cor} \label{cor:stability}
    Let $c>0$ and suppose we have two complex numbers $\omold$ and $\omnew$ whose imaginary parts satisfy $\Im(\omold),\Im(\omold)\leq-c$. Then, it holds that
    \begin{align*}
    \left\|s*h[\omold]-s*h[\omnew]\right\|_{L^{\infty}(\mathbb{R})} 
    \leq \frac{\sqrt{2}}{ce}\left|\omold-\omnew\right|\|s\|_{L^{1}(\mathbb{R})},
    \end{align*}
    for all $s \in L^{1}(\mathbb{R})$.
\end{cor}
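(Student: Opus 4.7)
My plan is to apply \Cref{thm:stability} to reduce the claim to the pointwise $L^\infty$-bound
\[
\|h[\omold]-h[\omnew]\|_{L^\infty(\mathbb{R})}\leq\frac{\sqrt{2}}{ce}|\omold-\omnew|,
\]
and then to estimate $h[\omold](t)-h[\omnew](t)$ directly from the explicit formula \eqref{eq:hdefn}, using the Lipschitz behaviour of $\sin$ and of $x\mapsto e^{xt}$ together with the decay furnished by the hypothesis $\Im(\omold),\Im(\omnew)\leq-c$.

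Since $h[\omega]$ vanishes on $(-\infty,0)$, I would restrict to $t\geq 0$ and insert the hybrid term $e^{\Im(\omold)t}\sin(\Re(\omnew)t)$ to decouple the real-part and imaginary-part contributions:
\[
h[\omold](t)-h[\omnew](t)=e^{\Im(\omold)t}\bigl(\sin(\Re(\omold)t)-\sin(\Re(\omnew)t)\bigr)+\bigl(e^{\Im(\omold)t}-e^{\Im(\omnew)t}\bigr)\sin(\Re(\omnew)t).
\]
Applying $|\sin a-\sin b|\leq|a-b|$ to the first bracket, and the mean value theorem together with monotonicity of $x\mapsto e^{xt}$ and the assumption $\Im(\omold),\Im(\omnew)\leq-c$ to the second bracket, I would obtain
\[
|h[\omold](t)-h[\omnew](t)|\leq te^{-ct}\bigl(|\Re(\omold)-\Re(\omnew)|+|\Im(\omold)-\Im(\omnew)|\bigr)\qquad\text{for all }t\geq 0.
\]

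To finish, I would invoke the elementary inequality $|x|+|y|\leq\sqrt{2}\sqrt{x^2+y^2}$ to replace the sum of real- and imaginary-part differences by $\sqrt{2}|\omold-\omnew|$, and maximise $t\mapsto te^{-ct}$ over $t\geq 0$ to extract the factor $1/(ce)$, which is attained at $t=1/c$. Combined, these yield the claimed bound on $\|h[\omold]-h[\omnew]\|_{L^\infty(\mathbb{R})}$, and \Cref{thm:stability} then delivers the corollary. There is no real conceptual obstacle here; the only delicate point is bookkeeping the sharp constant $\sqrt{2}/(ce)$, which requires the Cauchy--Schwarz-type step on real and imaginary parts rather than the crude bound $|\Re(\omold)-\Re(\omnew)|+|\Im(\omold)-\Im(\omnew)|\leq 2|\omold-\omnew|$. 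I also tacitly take the normalising constants $c_n$ in \eqref{eq:hdefn} to be unity, which is the natural reading of the corollary as it is phrased purely in terms of $\omold$ and $\omnew$.
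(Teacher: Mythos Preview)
Your proposal is correct and follows essentially the same route as the paper: split $h[\omold](t)-h[\omnew](t)$ via a hybrid term, bound the sine difference by the Lipschitz estimate and the exponential difference by the mean value theorem under $\Im(\omega)\le -c$, then combine $|\Re(\cdot)|+|\Im(\cdot)|\le\sqrt{2}\,|\cdot|$ with $\sup_{t\ge0}te^{-ct}=1/(ce)$ and invoke \Cref{thm:stability}. The only cosmetic difference is which hybrid term is inserted (you use $e^{\Im(\omold)t}\sin(\Re(\omnew)t)$, the paper uses $e^{\Im(\omnew)t}\sin(\Re(\omold)t)$), which is immaterial.
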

\begin{proof}
We begin with the observation that
\begin{align*}
    &\left|h[\omold](t)-h[\omnew](t)\right|\\&\qquad\qquad=\left|\left(e^{\Im(\omold)t}-e^{\Im(\omnew)t}\right)\sin(\Re(\omold)t) +e^{\Im(\omnew)t} \left(\sin(\Re(\omold)t)-\sin(\Re(\omnew)t) \right)\right|,
\end{align*}
for $t>0$. Then, we have that
\begin{align*}
    \left|\left(e^{\Im(\omold)t}-e^{\Im(\omnew)t}\right)\sin(\Re(\omold)t)\right|\leq 
    \left|e^{\Im(\omold)t}-e^{\Im(\omnew)t}\right|\leq \frac{1}{ce}|\Im(\omold)-\Im(\omnew)|,
\end{align*}
for $t>0$, where we have used the fact that $\sup_{t>0}\sup_{\omega<-c}|te^{\omega t}|=\frac{1}{ce}$. Similarly, we have that
\begin{equation*}
    \left|e^{\Im(\omnew)t} \left(\sin(\Re(\omold)t)-\sin(\Re(\omnew)t) \right)\right|\leq\frac{1}{ce}|\Re(\omold)-\Re(\omnew)|.
\end{equation*}
for $t>0$, where we have used the fact that $\sup_{t>0}\sup_{\omega<-c}|te^{\omega t}\cos(at)|\leq\frac{1}{ce}$ for any $a\in\mathbb{R}$. Putting this together, we have that
\begin{align*}
    \left\|s*h[\omold]-s*h[\omnew]\right\|_{L^{\infty}(\mathbb{R})} 
    \leq \frac{1}{ce}\Big(\left|\Im(\omold)-\Im(\omnew)\right|+\left|\Re(\omold)-\Re(\omnew)\right|\Big)\|s\|_{L^{1}(\mathbb{R})},
\end{align*}
from which we arrive at the result, using the inequality $|a|+|b|\leq\sqrt{2(a^2+b^2)}$.
\end{proof}

While \Cref{thm:stability} is the standard stability result for convolutional signal processing algorithms, \Cref{cor:stability} is most revealing here. It shows that the outputs of the induced biomimetic signal transform (defined by \eqref{eq:andefn} here) are stable with respect to changes in the resonant frequencies of the physical device. From Sections~\ref{sec:imperfections} and \ref{sec:remove}, we know that the resonant frequencies of the cochlea-inspired rainbow sensor are robust with respect to a variety of errors and imperfections (particularly in large resonator arrays), meaning that the biomimetic signal transform inherits this robustness. 

To test the robustness for small arrays with removed resonators, \Cref{fig:freqsupport} shows the frequency support of the filter array used in the biomimetic signal transform in the case of successively removed resonators (the same sequence of structures was simulated in \Cref{fig:removal}). In this small array (of 22 resonators, initially) we see that gaps emerge when multiple resonators are removed, corresponding to hearing loss at frequencies within these gaps. It is interesting to note that the gaps emerge at higher frequencies. This was observed in many simulations and is commensurate with the wider spacing of frequencies at the upper end of the audible range (see \Cref{fig:largearray}, for example) and, interestingly, is consistent with the observation that human hearing loss initially occurs at high frequencies in most people \cite{wu2020age}.

\begin{figure}
    \centering
    \includegraphics[width=\linewidth,trim=0 1.5cm 0 3.3cm, clip]{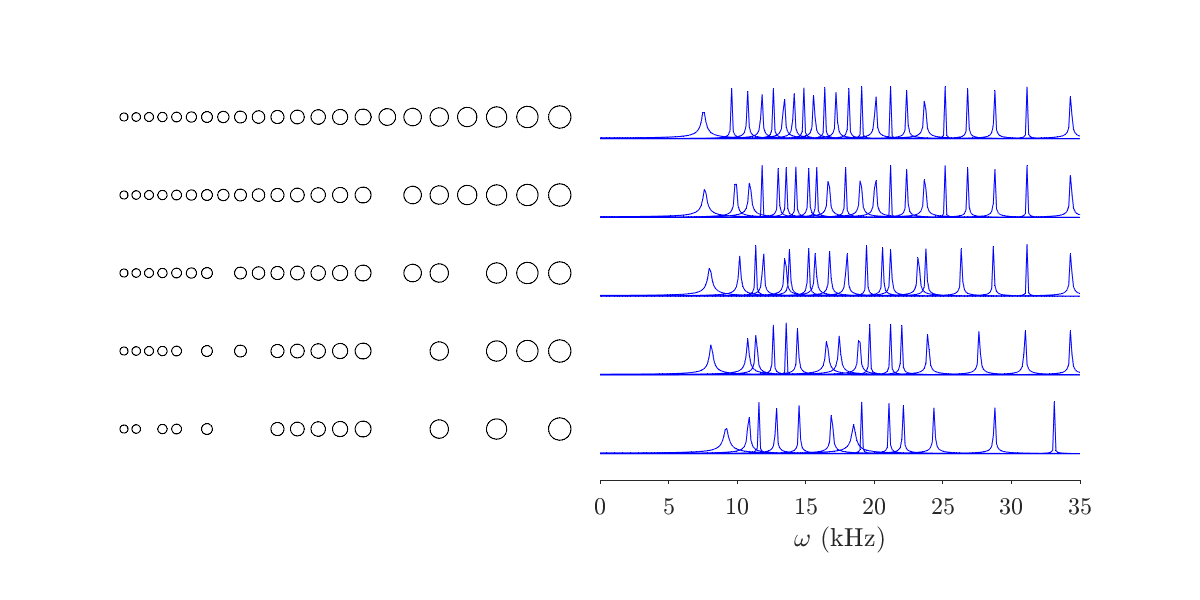}
    \caption{The frequency supports of the filter kernels $h[\omega_n]$ induced by a cochlea-inspired rainbow sensor. Each subsequent array has additional resonators removed and for each array we plot the Fourier transform of $h[\omega_n]$, $n=1,\dots,N$, normalized in $L^2(\mathbb{R})$.}
    \label{fig:freqsupport}
\end{figure}

\section{Concluding remarks}

The formulas derived in this work show that a cochlea-inspired rainbow sensor is robust with respect to small perturbations in the position and size of the constituent resonators. The effect of removing resonators was also described; it was shown that the change in the subwavelength resonant frequencies is always bounded (via an interlacing theorem) and can be small in the case of sufficiently large arrays. The implication of this analysis for related signal transforms were also studied, and it was shown that stability properties are inherited from the underlying resonant frequencies. The implications for the the corresponding biomimietic signal transform were also studied, and it was shown that this inherits the robustness of the device's resonant frequencies.

The analysis in this work (\Cref{sec:large}, in particular) suggests a possible mechanism through which a sufficiently large structure could be robust to (surprisingly) large perturbations. However, the extent to which this truly replicates the remarkable robustness of the cochlea is unclear. While the mechanisms which underpin the function of cochlea-inspired rainbow sensors (which are locally resonant graded metamaterials) and biological cochleae (which have a graded membrane with receptor cells on the surface) are quite different, there is scope for further insight to be traded between the two communities. For example, there has recently been new insight into the role of topological protection in rainbow sensors \cite{chaplain2020topological, chaplain2020delineating} and in signal processing devices \cite{zangeneh2019topological}.

\section*{Acknowledgements}
The authors would like to thank Habib Ammari for his insight and support. This work was supported by the Seminar for Applied Mathematics at ETH Zurich, where both authors were formerly based. The work of BD was also supported by the H2020 FETOpen project BOHEME under grant agreement No.~863179. The authors are also grateful to Elizabeth M Keithley for providing the micrographs in \Cref{fig:cochlea_damage}.

\section*{Data accessibility}

The code used in this study is available at \hyperlink{https://doi.org/10.5281/zenodo.5541152}{https://doi.org/10.5281/zenodo.5541152}.

\bibliographystyle{abbrv}
\bibliography{robust}{}
\end{document}